\newtheorem{theorem}{\bf Theorem}[section]
\newtheorem{corollary}[theorem]{\bf Corollary}
\newtheorem{lemma}[theorem]{\bf Lemma}
\newtheorem{proposition}[theorem]{\bf Proposition}
\newtheorem{remark}[theorem]{\bf Remark}
\newcommand{\qed}{\hfill $\square$ \bigskip}
\newcommand{\Bin}{{\cal B}}
\newcommand{\Fib}{{\cal F}}
\newcommand{\Luc}{{\cal L}}
\begin{document}
\title{Edges in Fibonacci cubes, Lucas cubes and complements}

\author{
Michel Mollard\footnote{Institut Fourier, CNRS, Universit\'e Grenoble Alpes, France email: michel.mollard@univ-grenoble-alpes.fr}
}
\date{\today}
\maketitle

\begin{abstract}
\noindent The {\em Fibonacci cube} of dimension $n$, denoted as $\Gamma_n$, is the subgraph of the hypercube induced by vertices with no consecutive 1's. The irregularity of a graph $G$ is the sum of $|d(x)-d(y)|$ over all edges $\{x,y\}$ of $G$. In two recent paper based on the recursive structure of $\Gamma_n$ it is proved that the irregularity  of $\Gamma_n$ and $\Lambda_n$ are two times the number of edges of $\Gamma_{n-1}$ and $2n$ times the number of vertices of $\Gamma_{n-4}$, respectively. Using an interpretation of the irregularity in terms of couples of incident edges of a special kind (Figure \ref{irr}) we give a bijective proof of both results. For these two graphs we deduce also a constant time algorithm for computing the imbalance of an edge. In the last section using the same approach we determine the number of edges and the sequence of degrees of the cube complement of $\Gamma_n$.
\end{abstract}

\noindent
{\bf Keywords:} Irregularity of graph, Fibonacci cube, Lucas cube, cube-complement, daisy cube. 

\noindent
{\bf AMS Subj. Class. }: 05C07,05C35

\section{Introduction and notations}

An interconnection topology can be represented by a graph $G=(V,E)$, where $V$ denotes the processors and $E$ the communication links.
The hypercube $Q_n$ is a popular interconnection network because of its structural properties.\\
The {\em Fibonacci cube} of dimension $n$, denoted as $\Gamma_n$, is the subgraph of the hypercube induced by vertices with no consecutive 1's. This graph was introduced in \cite{hsu93} as a new interconnection network.

$\Gamma_n$ is an isometric subgraph of the hypercube which is inspired in the Fibonacci numbers. It has attractive recurrent structures such as
its decomposition into two subgraphs which are also Fibonacci cubes by themselves.  Structural properties of these graphs were more extensively 
studied afterwards. See \cite{survey} for a survey. \\
\indent Lucas cubes, introduced in \cite{mupe2001}, have attracted the attention as well due to the fact that these cubes are 
the cyclic version of Fibonacci cubes. They have also been widely studied
\cite{ca2011, camo2012, dedo2002, km2012, klmope2011, ra2013}. \\
The determination of degree sequence \cite{klmope2011} is one of the first enumerative results about Fibonacci cubes.

Let $G=(V(G),E(G))$ be a connected graph. The degree of a vertex $x$ is denoted by $d_G(x)$ or $d(x)$ when there is no ambiguity.
The \emph{imbalance} of an edge $e=\{x,y\}\in E(G)$ is defined by $imb_G(e)=|d_G(x)-d_G(y)|$.
The \emph{irregularity} of a non regular graph $G$ is $$irr(G)=\sum_{e\in E(G)}{imb_G(e)}.$$
This concept of irregularity was introduced in \cite{albert1997} as a measure of graph's global non-regularity.

In two recent papers \cite{ali2019, egec2020} using the inductive structure of Fibonacci cubes it is proved that $irr(\Gamma_n)=2|E(\Gamma_{n-1})|$ and $irr(\Lambda_n)=2n|V(\Gamma_{n-4})|$. One of our motivation is to give direct bijective proofs of these remarkable properties.

The \emph{generalized Fibonacci cube} $\Gamma_n(s)$ is the graph obtained from $Q_n$  by removing all vertices that contain a given binary string $s$  as a substring. For example  $\Gamma_n(11)=\Gamma_n$. \emph{Daisy cubes} are  an other kind of generalization of Fibonacci cubes introduced in \cite{kamo2018}. \\
For $G$  an induced subgraph of $Q_n$, the \emph{cube-complement of $G$} is the graph induced by the vertices of $Q_n$ which are not in $G$.  In \cite{ve2019} the questions whether the cube complement of  generalized Fibonacci cube is connected, an isometric subgraph of a hypercube or a median graph are studied. It is also proved in the same paper that the cube-complement of a daisy cube is a daisy cube. We consider in the last section $\overline{\Gamma}_n $ the cube complement of $\Gamma_n$.

We give the number of edges of $\overline{\Gamma}_n$ and determine, using the main lemma of the first section, the degree sequence of $\overline{\Gamma}_n $. We will also study the embedding of $\Gamma_n$ in $\overline{\Gamma}_n$.

We will next give  some concepts and notations needed in this paper.
We note by $[1,n]$ the set of integers $i$ such that $1\leq i \leq n$.
The vertex set of the \emph{hypercube of dimension $n$} $Q_n$ is the set $\Bin_n$ of  binary strings of length $n$, two vertices being adjacent if they differ in precisely one position. We will note $\overline{x_i}$ the binary complement of $x_i$.

Let $x=x_1\ldots x_n$ be a binary string and $i\in[1,n]$ we will denote by $x+\delta_i$ the string $x'_1\ldots x_n'$ where $x'_j=\overline{x_j}$ for $j=i$ and $x'_j=x_j$ otherwise. We will say that the edge $\{x,x+\delta_i\}$ \emph{uses the direction $i$}. The endpoint $x$ such that $x_i=1$ of an edge using the direction $i$  will be called  \emph{upper endpoint} and $y$ the \emph{lower endpoint}.

A {\em Fibonacci string} of length $n$ is a binary string $b=b_1 b_2\ldots b_n$ with $b_i\cdot b_{i+1}=0$ for $1\leq i<n$. In other words a Fibonacci string is a binary string without $11$ as substring.\\ 
The {\em Fibonacci cube} $\Gamma_n$ ($n\geq 1$) is the subgraph of $Q_n$ induced by the Fibonacci strings of length $n$. Because of the empty string $\epsilon$, $\Gamma_0 = K_1$. 

A Fibonacci string $b$ of length $n$ is a \textit{Lucas string} if $b_1 \, \cdotp b_n \neq 1$. 
That is, a Lucas string has no two consecutive 1's including the first and the last elements of the string. 
The \textit{Lucas cube} $\Lambda_n$ is the subgraph of $Q_n$ induced by the Lucas strings of length $n$. 
We have $\Lambda_0 = \Lambda_1 = K_1$.

\begin{figure}
  \centering
\setlength{\unitlength}{1 mm}
\begin{picture}(160, 30)

\newsavebox{\gtwo}
\savebox{\gtwo}
  (30,30)[bl]
  {
  
\put(10,10){\circle*{2}}
\put(10,20){\circle*{2}}
\put(10,30){\circle*{2}}

\put(10,10){\line(0,1){10}}
\put(10,20){\line(0,1){10}}

\put(5,10){$01$}
\put(5,20){$00$}
\put(5,30){$10$}

  }

 \newsavebox{\gthreeb}
\savebox{\gthreeb}
  (30,30)[bl]
  {
  
\put(10,10){\circle*{2}}
\put(10,20){\circle*{2}}
\put(10,30){\circle*{2}}

\put(10,10){\line(0,1){10}}
\put(10,20){\line(0,1){10}}

\put(3,10){$011$}
\put(3,20){$111$}
\put(3,30){$110$}

  }
 \newsavebox{\gthree}
\savebox{\gthree}
  (40,30)[bl]
  {
  
\put(10,10){\circle*{2}}
\put(10,20){\circle*{2}}
\put(10,30){\circle*{2}}

\put(20,20){\circle*{2}}
\put(20,10){\circle*{2}}

\put(10,20){\line(1,0){10}}
\put(10,10){\line(0,1){10}}
\put(10,20){\line(0,1){10}} 
\put(10,10){\line(1,0){10}}
\put(20,10){\line(0,1){10}}

\put(3,10){$001$}
\put(3,20){$000$}
\put(3,30){$010$}
\put(21,20){$100$}
\put(21,10){$101$}

 }
\newsavebox{\lthree}
\savebox{\lthree}
  (40,30)[bl]
  {
  
\put(10,10){\circle*{2}}
\put(10,20){\circle*{2}}
\put(10,30){\circle*{2}}

\put(20,20){\circle*{2}}

\put(10,20){\line(1,0){10}}
\put(10,10){\line(0,1){10}}
\put(10,20){\line(0,1){10}} 

\put(3,10){$001$}
\put(3,20){$000$}
\put(3,30){$010$}
\put(21,20){$100$}

 }

\newsavebox{\gfour}
\savebox{\gfour}
  (40,30)[bl]
  {

\put(10,20){\circle*{2}}
\put(10,30){\circle*{2}}
\put(20,10){\circle*{2}}
\put(20,20){\circle*{2}}
\put(20,30){\circle*{2}}
\put(30,10){\circle*{2}}
\put(30,20){\circle*{2}}
\put(10,10){\circle*{2}}

\put(10,20){\line(1,0){10}}
\put(10,30){\line(1,0){10}}

\put(10,20){\line(0,1){10}}

\put(20,10){\line(1,0){10}}
\put(20,20){\line(1,0){10}}

\put(20,10){\line(0,1){10}}
\put(20,20){\line(0,1){10}}
\put(30,10){\line(0,1){10}} 

\put(10,10){\line(1,0){10}}
\put(10,10){\line(0,1){10}}

\put(20,12){$0001$}
\put(20,22){$0000$}
\put(20,32){$0010$}
\put(31,20){$0100$}
\put(1,20){$1000$}
\put(1,30){$1010$}
\put(31,10){$0101$}

\put(1,10){$1001$}

 }

\newsavebox{\gfourb}
\savebox{\gfourb}
  (40,30)[bl]
  {

\put(10,20){\circle*{2}}
\put(10,30){\circle*{2}}
\put(20,10){\circle*{2}}
\put(20,20){\circle*{2}}
\put(20,30){\circle*{2}}
\put(30,10){\circle*{2}}
\put(30,20){\circle*{2}}
\put(10,10){\circle*{2}}

\put(10,20){\line(1,0){10}}
\put(10,30){\line(1,0){10}}

\put(10,20){\line(0,1){10}}

\put(20,10){\line(1,0){10}}
\put(20,20){\line(1,0){10}}

\put(20,10){\line(0,1){10}}
\put(20,20){\line(0,1){10}}
\put(30,10){\line(0,1){10}} 

\put(10,10){\line(1,0){10}}
\put(10,10){\line(0,1){10}}

\put(20,12){$0111$}
\put(20,22){$1111$}
\put(20,32){$1101$}
\put(31,20){$1011$}
\put(1,20){$1110$}
\put(1,30){$1100$}
\put(31,10){$0011$}

\put(1,10){$0110$}

 }

\put(-8,-5){\usebox{\gtwo}}  
\put(9,-5){\usebox{\gthree}} 
\put(35,-5){\usebox{\lthree}}  

\put(63,-5){\usebox{\gfour}}
\put(102,-5){\usebox{\gthreeb}}  
\put(120,-5){\usebox{\gfourb}}
\end{picture}
\caption{$\Gamma_2=\Lambda_2$, $\Gamma_3$, $\Lambda_3$, $\Gamma_4$, $\overline{\Gamma}_3$ and $\overline{\Gamma}_4$.}\label{F1}

\end{figure}

Let $F_n$ be the $n$th Fibonacci number: $F_0=0$, $F_1=1$, $F_n=F_{n-1}+F_{n-2}$ for $n\geq2$.

Let $\Fib_n$ and $\Luc_n$ be the sets of strings of Fibonacci strings and Lucas strings of length $n$.
Let $\Fib_n^{1.}$ and $\Fib_n^{0.}$ be the set of strings of $\Fib_n$ that begin with $1$ and that do not begin with $1$, respectively. Note that with this definition $\Fib_0^{0.}=\{\epsilon\}$ and $\Fib_0^{1.}=\emptyset$. Let $\Fib_n^{.0}$ be the set of strings of $\Fib_n$ that do not end with $1$. Thus $|\Fib_n^{.0}|=|\Fib_n^{0.}|$.
Let $\Fib_n^{00}$ be the set of strings of $\Fib_n^{0.}$ that  do not end with $1$. With this definition  $\Fib_0^{00}=\{\epsilon\}$, $\Fib_1^{00}=\{0\}$ and $\Fib_2^{00}=\{00\}$.

From $\Fib_{n+2}=\{0\bm{s}; \bm{s}\in \Fib_{n+1}\}\cup\{10\bm{s}; \bm{s}\in \Fib_{n}\},\Fib_{n+1}^{0.}=\{0\bm{s}; \bm{s}\in \Fib_{n}\} \text{and }\Fib_{n+1}^{1.}=\{1\bm{s}; \bm{s}\in \Fib_{n}^{0.}\}$ we obtain the following classical result.

\begin{proposition}\label{nbstring}
Let $n \ge 0$. The numbers of Fibonacci strings in $\Fib_{n}$, $\Fib_{n}^{0.}$ and $\Fib_{n}^{1.}$ are  $|\Fib_{n}|=F_{n+2}$, $|\Fib_{n}^{0.}|=F_{n+1}$ and $|\Fib_{n}^{1.}|=F_{n}$ respectively.
Let $n \ge 1$. The number of Fibonacci strings in  $\Fib_n^{00}$ is  $|\Fib_n^{00}|=F_{n}$.
\end{proposition}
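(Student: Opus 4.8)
The plan is to prove all four cardinalities by induction, using the three set decompositions displayed just above the statement as the engine. Write $a_n=|\Fib_n|$, $b_n=|\Fib_n^{0.}|$ and $c_n=|\Fib_n^{1.}|$. The first observation is that each of the maps $\bm{s}\mapsto 0\bm{s}$ and $\bm{s}\mapsto 1\bm{s}$ appearing in the decompositions is injective and preserves membership in the relevant sets, so the decompositions are in fact disjoint unions realized by explicit bijections. Concretely, from $\Fib_{n+1}^{0.}=\{0\bm{s};\bm{s}\in\Fib_n\}$ I read off $b_{n+1}=a_n$, and from $\Fib_{n+1}^{1.}=\{1\bm{s};\bm{s}\in\Fib_n^{0.}\}$ I read off $c_{n+1}=b_n$. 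Since every string of $\Fib_n$ either begins with $1$ or does not, $\Fib_n=\Fib_n^{1.}\sqcup\Fib_n^{0.}$, whence $a_n=b_n+c_n$.

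Next I would turn these three relations into a single recurrence. Substituting gives $a_{n+2}=b_{n+2}+c_{n+2}=a_{n+1}+b_{n+1}=a_{n+1}+a_n$, so $a_n$ satisfies the Fibonacci recurrence. The base cases are $a_0=|\{\epsilon\}|=1=F_2$ and $a_1=|\{0,1\}|=2=F_3$; an immediate induction then yields $a_n=F_{n+2}$ for all $n\ge 0$. The remaining two counts follow for free: for $n\ge 1$ we have $b_n=a_{n-1}=F_{n+1}$ (and $b_0=1=F_1$ directly), while $c_n=a_n-b_n=F_{n+2}-F_{n+1}=F_n$ by the defining recurrence of the Fibonacci numbers. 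This settles the first three equalities.

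For the last count I would exploit the reversal symmetry already noted in the text, namely $|\Fib_n^{.0}|=|\Fib_n^{0.}|$, together with one more peeling step. Every string of $\Fib_n^{00}$ begins with $0$, so for $n\ge 2$ writing $\bm{s}=0\bm{t}$ sets up a bijection $\Fib_n^{00}\to\Fib_{n-1}^{.0}$: the tail $\bm{t}\in\Fib_{n-1}$ ends in $0$, and prepending a $0$ never creates a forbidden $11$. Hence $|\Fib_n^{00}|=|\Fib_{n-1}^{.0}|=|\Fib_{n-1}^{0.}|=F_n$ for $n\ge 2$, and the case $n=1$ is checked by hand from $\Fib_1^{00}=\{0\}$.

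No step here is genuinely hard; the only points demanding care are the verification that each displayed decomposition is truly disjoint and bijective (so that cardinalities add and transfer cleanly) and the bookkeeping of the small-$n$ base cases, where the empty string forces the slightly unusual conventions $\Fib_0^{0.}=\{\epsilon\}$ and $\Fib_0^{1.}=\emptyset$. Once those are pinned down, the Fibonacci recurrence does all the work.
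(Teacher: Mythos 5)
Your proof is correct and follows essentially the route the paper intends: the paper states the result as following directly from the three displayed decompositions, and you simply spell out the resulting recurrences, the base cases forced by the empty-string conventions, and the extra peeling step $\Fib_n^{00}\to\Fib_{n-1}^{.0}$ for the last count. Nothing to add.
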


The following expressions for the number of edges in  $\Gamma_{n}$ are obtained in \cite{Kla2005} and \cite{mupe2001} .
\begin{proposition}\label{nbedges}
Let $n \ge 0$. The number of edges in $\Gamma_{n}$ is $|E(\Gamma_{n})|=\sum_{i=1}^n{F_{i}F_{n-i+1}}=\frac{nF_{n+1}+2(n+1)F_n}{5}$ and satisfies  the induction formula $|E(\Gamma_{n+2})|=|E(\Gamma_{n+1})|+|E(\Gamma_{n})|+|V(\Gamma_{n})|$ .
\end{proposition}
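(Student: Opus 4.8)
The plan is to obtain the three assertions separately and then tie them together through a common recurrence. First I would establish the summation formula $|E(\Gamma_n)| = \sum_{i=1}^n F_i F_{n-i+1}$ by partitioning the edges of $\Gamma_n$ according to the direction they use. An edge using direction $i$ is determined by its upper endpoint $x$, i.e.\ the Fibonacci string with $x_i = 1$, since its lower endpoint $x+\delta_i$ is then automatically a Fibonacci string. The condition that $x$ contains no $11$ forces $x_{i-1} = 0$ (when $i > 1$) and $x_{i+1} = 0$ (when $i < n$). Hence the prefix $x_1 \ldots x_{i-1}$ ranges over $\Fib_{i-1}^{.0}$ and the suffix $x_{i+1} \ldots x_n$ over $\Fib_{n-i}^{0.}$, independently. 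By Proposition \ref{nbstring} these sets have sizes $F_i$ and $F_{n-i+1}$, so there are exactly $F_i F_{n-i+1}$ edges using direction $i$, and summing over $i \in [1,n]$ yields the formula.

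Next I would derive the induction formula from the canonical decomposition $\Fib_{n+2} = \{0\bm s ; \bm s \in \Fib_{n+1}\} \cup \{10\bm s ; \bm s \in \Fib_n\}$ recalled before Proposition \ref{nbstring}. This splits $V(\Gamma_{n+2})$ into a copy of $\Gamma_{n+1}$ (the strings beginning with $0$) and a copy of $\Gamma_n$ (the strings beginning with $10$). Every edge of $\Gamma_{n+2}$ that does not use direction $1$ stays inside one of these two blocks, contributing $|E(\Gamma_{n+1})| + |E(\Gamma_n)|$ edges. An edge using direction $1$ joins $0\bm s$ to $1\bm s$, and $1\bm s$ is a Fibonacci string only when $\bm s = 0\bm t$ with $\bm t \in \Fib_n$; thus these edges are exactly the pairs $\{00\bm t, 10\bm t\}$ for $\bm t \in \Fib_n$, of which there are $|V(\Gamma_n)|$. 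Adding the three contributions gives $|E(\Gamma_{n+2})| = |E(\Gamma_{n+1})| + |E(\Gamma_n)| + |V(\Gamma_n)|$.

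Finally, to reach the closed form I would exploit that, by Proposition \ref{nbstring}, $|V(\Gamma_n)| = F_{n+2}$, so the sequence $|E(\Gamma_n)|$ is the unique solution of $a_{n+2} = a_{n+1} + a_n + F_{n+2}$ with $a_0 = 0$ and $a_1 = 1$. It then suffices to check that $c_n = \frac{n F_{n+1} + 2(n+1) F_n}{5}$ satisfies the same recurrence and the same two initial values; the equality $|E(\Gamma_n)| = c_n$ follows by induction. Verifying the recurrence for $c_n$ is a routine manipulation: after expanding $F_{n+3}$ and $F_n$ in terms of $F_{n+2}$ and $F_{n+1}$ via the Fibonacci recurrence, both $c_{n+2}$ and $c_{n+1} + c_n + F_{n+2}$ reduce to $\frac{(3n+8)F_{n+2} + (n+2)F_{n+1}}{5}$. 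Alternatively, since $\sum_{i=1}^n F_i F_{n-i+1}$ is the coefficient of $x^{n+1}$ in $x^2/(1-x-x^2)^2$, the closed form can be extracted directly by partial fractions.

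I expect the only genuinely delicate point to be the coefficient bookkeeping in the closed-form verification; the two combinatorial steps are straightforward once the decomposition and the direction-counting are set up. The remainder is routine algebra with the Fibonacci recurrence, so I would present it compactly rather than in full.
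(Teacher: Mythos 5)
Your proposal is correct, but note that the paper does not actually prove this proposition: it is quoted from the literature, and the only in-paper trace of an argument is the Remark immediately following it, which records precisely your first step --- the bijection $\theta$ between the edges using direction $i$ and $\Fib_{i-1}^{.0}\times\Fib_{n-i}^{0.}$, giving $F_iF_{n-i+1}$ such edges. Your second step (splitting $\Fib_{n+2}$ into the strings $0\bm{s}$ and $10\bm{s}$, with the cross edges being exactly the $|V(\Gamma_n)|$ pairs $\{00\bm{t},10\bm{t}\}$) is sound and is the same decomposition the paper later uses for the edge recurrence of $\overline{\Gamma}_n$. The closed-form verification also checks out: $c_n=\frac{nF_{n+1}+2(n+1)F_n}{5}$ gives $c_0=0$, $c_1=1$, and both $c_{n+2}$ and $c_{n+1}+c_n+F_{n+2}$ reduce to $\frac{(3n+8)F_{n+2}+(n+2)F_{n+1}}{5}$, so induction closes the argument. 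In short, you have supplied a complete self-contained proof of a statement the paper only cites, assembled from ingredients (the direction-counting bijection and the prefix decomposition) that the paper itself uses elsewhere.
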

\begin{remark}
Let $\{x,x+\delta_i\}$ be an edge and  $\theta(x)=((x_{1}x_{2}\ldots x_{i-1}), (x_{i+1} x_{i+2}\ldots x_{n})) $. 
A combinatorial interpretation of  $|E(\Gamma_{n})|=\sum_{i=1}^n{F_{i}F_{n-i+1}}$ is that for any $i\in[1,n]$ $\theta$ is a one to one mapping between the set of edges using the direction $i$ and the Cartesian product $\Fib_{i-1}^{.0}\times \Fib_{n-i}^{0.}$
\end{remark}

Let $G$ be an induced subgraph of $Q_n$. Let $e=\{x,y\}$ be an edge of $G$ where $y$ is the lower endpoint of $e$ and $x=y+\delta_i$. An edge $e'=\{y,y+\delta_j\}$ of $G$ will be called an \emph{imbalanced} edge for $e$ if $x+\delta_j \notin V(G)$ and thus $\{x,x+\delta_j\}\notin E(G)$. Note that such couple of edges does not exist for $G= Q_n$. We will prove in the next to sections that for $G=\Gamma_n$ and $G=\Lambda_n$ the irregularity of $G$ is the number of such couples of edges (Figure \ref{irr}).

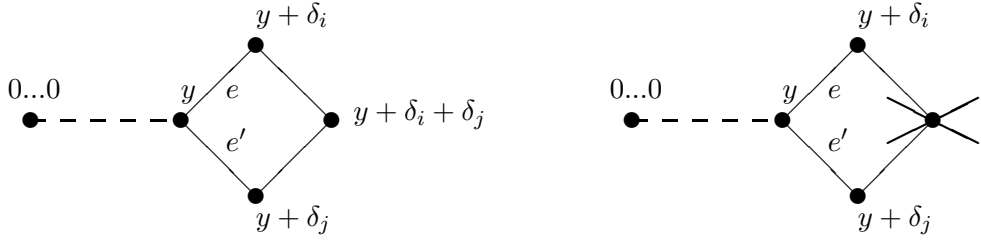
\begin{figure}
  \centering
\setlength{\unitlength}{1 mm}
\begin{picture}(160, 30)

\newsavebox{\leftcerise}
\savebox{\leftcerise}
  (80,30)[bl]
  {
  
\put(10,30){\circle*{2}}
\put(30,30){\circle*{2}}
\put(50,30){\circle*{2}}
\put(40,40){\circle*{2}}
\put(40,20){\circle*{2}}

\put(10,30){\line(1,0){2}}
\put(14,30){\line(1,0){2}}
\put(18,30){\line(1,0){2}}
\put(22,30){\line(1,0){2}}
\put(26,30){\line(1,0){2}}

\put(30,30){\line(1,1){10}}
\put(30,30){\line(1,-1){10}}
\put(50,30){\line(-1,1){10}}
\put(50,30){\line(-1,-1){10}}

\put(7,33){$0...0$}
\put(30,33){$y$}
\put(40,43){$y+\delta_i$}
\put(36,33){$e$}
\put(40,16){$y+\delta_j$}
\put(36,26){$e'$}
\put(53,30){$y+\delta_i+\delta_j$}

  }

 \newsavebox{\rightcerise }
\savebox{\rightcerise}
  (80,30)[bl]
  {
  
\put(10,30){\circle*{2}}
\put(30,30){\circle*{2}}
\put(50,30){\circle*{2}}
\put(40,40){\circle*{2}}
\put(40,20){\circle*{2}}

\put(10,30){\line(1,0){2}}
\put(14,30){\line(1,0){2}}
\put(18,30){\line(1,0){2}}
\put(22,30){\line(1,0){2}}
\put(26,30){\line(1,0){2}}

\put(30,30){\line(1,1){10}}
\put(30,30){\line(1,-1){10}}
\put(50,30){\line(-1,1){10}}
\put(50,30){\line(-1,-1){10}}

\put(7,33){$0...0$}
\put(30,33){$y$}
\put(40,43){$y+\delta_i$}
\put(36,33){$e$}
\put(40,16){$y+\delta_j$}
\put(36,26){$e'$}

\thicklines
\put(44,33){\line(2,-1){12}}
\put(44,27){\line(2,1){12}}
  }

\put(0,-5){\usebox{\leftcerise}}  
\put(80,-5){\usebox{\rightcerise}} 
\end{picture}
\caption{$irr(\Gamma_n)$ and $irr(\Lambda_n)$ count the couples of edges $(e,e')$ of the right kind.}
\label{irr}
\end{figure}

\section{Edges in Fibonacci cube}
\begin{lemma}\label{lemmazero}
Let $x,y$ be two strings in $\Fib_n$ with $y=x+\delta_i$ and $x_i=1$. Then for all $j\in[1,n]$ we have
 $$ x+\delta_j\in \Fib_n \text { implies }y+\delta_j\in \Fib_n.$$
 \end{lemma}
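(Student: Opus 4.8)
The plan is to reduce the statement to a simple monotonicity property of the Fibonacci condition. The key observation I would establish first is that $\Fib_n$ is \emph{downward closed} for the coordinatewise order on $\Bin_n$: if $u,v\in\Bin_n$ satisfy $u_k\le v_k$ for every $k\in[1,n]$ and $v\in\Fib_n$, then $u\in\Fib_n$. The reason is that the set of positions carrying a $1$ in $u$ is contained in the corresponding set for $v$, so any factor $11$ in $u$ would already be a factor $11$ in $v$, contradicting $v\in\Fib_n$. Equivalently, turning $1$'s into $0$'s can only destroy occurrences of $11$, never create them.

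With this in hand I would split the argument according to whether $j=i$. If $j=i$, then $x+\delta_j=y$ and $y+\delta_j=x$, and both $x$ and $y$ lie in $\Fib_n$ by hypothesis, so the implication holds trivially. If $j\ne i$, I would first record that, since $y=x+\delta_i$ with $x_i=1$, the strings $x$ and $y$ agree in every position except $i$, where $x_i=1$ and $y_i=0$; in particular $y_k\le x_k$ for all $k$. Flipping the common direction $j\ne i$ in both strings sets position $j$ to the same value $\overline{x_j}=\overline{y_j}$ and leaves the comparison unchanged at every other coordinate, so $(y+\delta_j)_k\le (x+\delta_j)_k$ for all $k$, i.e. $y+\delta_j\le x+\delta_j$ coordinatewise. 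Applying the downward-closure observation with $u=y+\delta_j$ and $v=x+\delta_j$ then yields at once that $x+\delta_j\in\Fib_n$ implies $y+\delta_j\in\Fib_n$.

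The argument is essentially immediate, so there is no serious obstacle; the only point requiring a little care is verifying that flipping the \emph{same} direction $j$ in both endpoints preserves the inequality $y\le x$. This is precisely why it is convenient to isolate the degenerate case $j=i$ first, since there $\delta_j$ undoes the single difference between $x$ and $y$ and the coordinatewise comparison no longer applies, before invoking monotonicity in the remaining generic case.
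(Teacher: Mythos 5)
Your proof is correct and rests on the same key fact as the paper's, namely the coordinatewise inequality $y_k\le x_k$ for all $k$ (the paper phrases it as ``$x_p=0$ implies $y_p=0$''). You merely package this as downward closure of $\Fib_n$ and argue directly with a separate (trivial) case $j=i$, whereas the paper argues by contraposition, locating the offending factor $11$ in $y+\delta_j$ and transferring it to $x+\delta_j$; the two arguments are essentially the same.
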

\begin{proof}
 Assume  $y+\delta_j\notin \Fib_n$ then   $y_k=1$ for some $k$ in  $\{j-1,j+1\}\cap [1,n]$. But for all $ p\in[1,n]$ $x_p=0$ implies  $y_p=0$. Thus $x_k=1$ and   $x+\delta_j\notin \Fib_n$.
\end{proof}\qed

\begin{lemma}\label{lemmadelta}
Let $x,y$ two strings in $\Fib_n$ with $y=x+\delta_i$.
Then for all $j\in[1,n]$ with $|i-j|>1$ we have $$x+\delta_j\in \Fib_n\text { if and only if }y+\delta_j\in \Fib_n.$$
 \end{lemma}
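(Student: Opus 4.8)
The plan is to reduce both sides of the claimed equivalence to a purely local condition on three consecutive coordinates, and then to observe that flipping the far-away direction $i$ leaves those coordinates untouched.

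First I would characterize membership of $x+\delta_j$ in $\Fib_n$ locally. Since $x\in\Fib_n$ already contains no factor $11$, complementing the single entry $x_j$ can create a new occurrence of $11$ only at a position involving the index $j$. Hence whether $x+\delta_j\in\Fib_n$ depends only on the entries of $x$ at the positions of the window $\{j-1,j,j+1\}\cap[1,n]$. Concretely: if $x_j=1$, then complementing gives $0$ at position $j$ and no new $11$ can appear, so $x+\delta_j\in\Fib_n$ automatically; if $x_j=0$, then $x+\delta_j\in\Fib_n$ if and only if $x_k=0$ for every $k\in\{j-1,j+1\}\cap[1,n]$. In either case the answer is a function of the data of $x$ on that three-coordinate window alone.

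Next I would invoke the hypothesis $|i-j|>1$. Because $y=x+\delta_i$, the strings $x$ and $y$ agree in every coordinate except $i$, that is $x_p=y_p$ for all $p\neq i$. The condition $|i-j|>1$ says precisely that $i\notin\{j-1,j,j+1\}$, so in particular $x_{j-1}=y_{j-1}$, $x_j=y_j$ and $x_{j+1}=y_{j+1}$. Thus $x$ and $y$ carry identical data on the window $\{j-1,j,j+1\}\cap[1,n]$ that, by the previous paragraph, alone determines whether flipping direction $j$ keeps the string inside $\Fib_n$.

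Combining the two observations yields the equivalence: the memberships $x+\delta_j\in\Fib_n$ and $y+\delta_j\in\Fib_n$ are governed by the same local criterion evaluated on the same entries, so one holds if and only if the other does. I expect no genuine obstacle here; the only points requiring a little care are the boundary cases $j=1$ and $j=n$, where the window $\{j-1,j,j+1\}$ must be intersected with $[1,n]$, and making explicit why the restriction $|i-j|>1$ is exactly what is needed to upgrade the one-way implication of Lemma \ref{lemmazero} (valid for all $j$) to a full equivalence: it is precisely the guarantee that the flipped direction $i$ lies outside the window controlling direction $j$.
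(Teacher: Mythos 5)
Your proof is correct and follows essentially the same route as the paper: you split on whether $x_j=1$ (in which case flipping can never create a new $11$) or $x_j=0$ (in which case membership is equivalent to $x_k=0$ for all $k\in\{j-1,j+1\}\cap[1,n]$), and then observe that $|i-j|>1$ forces $i$ outside this window so that $x$ and $y$ agree there. The paper's proof is exactly this two-case argument, so no further comparison is needed.
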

\begin{proof}
\begin{itemize}
\item If $x_j=1$  then $y_j=x_j=1$ and both $x+\delta_j$ and $y+\delta_j$ belong to $\Fib_n$.
\item Assume  $x_j=0$ thus $y_j=0$. We have
$$x+\delta_j\in\Fib_n\text{ if and only if } x_{k}=0 \text{ for all } k\in \{j-1,j+1\}\cap[1,n]$$ and
$$y+\delta_j\in\Fib_n\text{ if and only if } y_{k}=0 \text{ for all } k\in \{j-1,j+1\}\cap[1,n].$$
But $i\notin\{j-1,j+1\}\cap[1,n]$ thus $x_k=y_k$ for all $k$ in this set and the two conditions are equivalent.
\end{itemize}
\end{proof} \qed

\begin{corollary}\label{corbij}
Let $n\geq2$ then $irr(\Gamma_n)$ is the number of couples $(e,e')\in E(\Gamma_n)^2$ where $e'$ is an imbalanced edge for $e$.
\end{corollary}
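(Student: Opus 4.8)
The plan is to reduce the global identity to a local one for a single edge: I will show that for each $e\in E(\Gamma_n)$ the imbalance $imb_{\Gamma_n}(e)$ equals the number of edges $e'$ that are imbalanced for $e$. Summing over all $e$ then gives the corollary at once, since the right-hand side is by construction $\sum_{e\in E(\Gamma_n)}\#\{e' : e' \text{ imbalanced for } e\}$.

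First I would fix an edge $e=\{x,y\}$ with lower endpoint $y$ and $x=y+\delta_i$, so that $x_i=1$ and $y_i=0$. The essential input is Lemma \ref{lemmazero}: applied to this pair it says that for every direction $j$, $x+\delta_j\in\Fib_n$ implies $y+\delta_j\in\Fib_n$. In graph terms, every neighbour of $x$ in $\Gamma_n$ yields a neighbour of $y$ in the same direction, so $d(x)\le d(y)$; consequently $imb_{\Gamma_n}(e)=d(y)-d(x)$. Thus the lower endpoint is automatically the endpoint of larger degree, which is exactly what makes the picture in Figure \ref{irr} consistent.

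Next I would evaluate this difference as a count of directions. Both $x$ and $y$ use direction $i$, and by the containment just established the set of directions $j\ne i$ available at $x$ is a subset of those available at $y$, whence
$$d(y)-d(x)=\#\{\,j\ne i : y+\delta_j\in\Fib_n \text{ and } x+\delta_j\notin\Fib_n\,\}.$$
Each $j$ counted here produces an edge $e'=\{y,y+\delta_j\}$ of $\Gamma_n$ with $x+\delta_j\notin V(\Gamma_n)$, i.e. precisely an imbalanced edge for $e$, and distinct $j$ give distinct $e'$; conversely every imbalanced edge for $e$ arises this way. Note that $j=i$ cannot occur, since $x+\delta_i=y\in V(\Gamma_n)$. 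A small check I would include is that any such $j$ forces $y_j=0$, so that $y$ is genuinely the lower endpoint of $e'$: if instead $y_j=1$ then $x_j=1$, and $x+\delta_j$ merely erases a $1$ and hence lies in $\Fib_n$, contradicting $x+\delta_j\notin\Fib_n$.

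I do not anticipate a real obstacle. The whole argument rests on the \emph{one-directional} implication of Lemma \ref{lemmazero}, which simultaneously forces $d(x)\le d(y)$ and identifies $imb_{\Gamma_n}(e)$ with the number of missing opposite edges $\{x,x+\delta_j\}$. The only point needing care is the bookkeeping that the degree difference counts directions (not vertices) and that $j=i$ is excluded; once these are in place the passage from the local identity to $irr(\Gamma_n)$ by summation over $E(\Gamma_n)$ is purely formal, and the hypothesis $n\ge 2$ only serves to guarantee that $\Gamma_n$ is non-regular so that $irr(\Gamma_n)$ is defined.
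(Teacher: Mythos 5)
Your argument is correct and is essentially the paper's proof of Corollary \ref{corbij}, only written out in more detail: both rest on Lemma \ref{lemmazero} to get $d(x)\le d(y)$ for the upper endpoint $x$, identify $imb(e)$ with the number of imbalanced edges for $e$, and sum over all edges. The extra bookkeeping you supply (excluding $j=i$, checking $y_j=0$) is sound but implicit in the paper's one-line justification.
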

\begin{proof}
By Lemma \ref{lemmazero} if $e=\{x,y\}$ is an edge using the direction $i$ with upper endpoint $x$ then $d(y)\geq d(x)$ and  $imb(e)$ is the number of imbalanced edges for $e$. The conclusion follows.
\end{proof}\qed

Furthermore assume that $e=\{x,y\}$ uses the direction $i$ with $x_i=1$ and let $e'=\{y,y+\delta_j\}$ be an imbalanced edge for $e$. Then by Lemma \ref{lemmadelta} we have $j=i+1$ or $j=i-1$.

We will call $e'$ a \emph{right }or \emph{left imbalanced} edge for $e$ accordingly. Let $R_{\Gamma_n}$ and $L_{\Gamma_n}$ be the sets of couples $(e,e')$ where $e'$ is a right imbalanced edge for $e$ and a left imbalanced edge for $e$, respectively, where $e$ goes through $E(\Gamma_n$).

\begin{theorem}
Let $n\geq2$. There exists a one to one mapping between $R_{\Gamma_n}$ or $L_{\Gamma_n}$ and $E(\Gamma_{n-1})$.
\end{theorem}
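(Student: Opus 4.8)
The plan is to prove directly that $R_{\Gamma_n}$ is in bijection with $E(\Gamma_{n-1})$ and then deduce the statement for $L_{\Gamma_n}$ by the string-reversal automorphism of $\Gamma_n$. First I would pin down exactly which edges carry a right imbalanced partner. Suppose $e=\{x,y\}$ uses direction $i$ with upper endpoint $x$, so $x_i=1$ and $y=x+\delta_i$, and $e'=\{y,y+\delta_j\}$ is imbalanced for $e$. By Lemma \ref{lemmadelta} no imbalance can occur when $|i-j|>1$, so necessarily $j=i+1$ for a right imbalanced partner; in particular $i\le n-1$. Since $x\in\Fib_n$ and $x_i=1$ we have $x_{i-1}=x_{i+1}=0$, and the requirement that $e'$ be imbalanced means precisely $y+\delta_{i+1}\in\Fib_n$ while $x+\delta_{i+1}\notin\Fib_n$. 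The second condition is automatic: $x_i=1$ and $(x+\delta_{i+1})_{i+1}=1$ already create the substring $11$. The first condition, given that $y_i=0$ holds, reduces to $y_{i+2}=0$, i.e. $x_{i+2}=0$ (vacuous when $i+2>n$). Thus a couple $(e,e')\in R_{\Gamma_n}$ is the same datum as an edge $e$ using a direction $i\in[1,n-1]$ whose upper endpoint $x$ satisfies $x_i=1$ and $x_{i+2}=0$, the partner $e'$ being uniquely determined by $e$.

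Next I would package this through the bijection $\theta$ recalled after Proposition \ref{nbedges}: for fixed $i$ the edges of $\Gamma_n$ using direction $i$ correspond bijectively to $\Fib_{i-1}^{.0}\times\Fib_{n-i}^{0.}$ via $x\mapsto(p,s)$ with $p=x_1\ldots x_{i-1}$ and $s=x_{i+1}\ldots x_n$. Among these, the ones lying in $R_{\Gamma_n}$ are exactly those with $s_2=0$, that is $s$ begins with $00$ (or $s=0$ when $n-i=1$). Writing $s=0s'$ strips the forced leading $0$ and leaves $s'=x_{i+2}\ldots x_n\in\Fib_{n-1-i}^{0.}$. Hence the couples in $R_{\Gamma_n}$ using direction $i$ are in bijection with $\Fib_{i-1}^{.0}\times\Fib_{n-1-i}^{0.}$, which by the same interpretation applied to $\Gamma_{n-1}$ is the set of edges of $\Gamma_{n-1}$ using direction $i$. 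Concretely the map is $\phi(e,e')=\{w,w+\delta_i\}$, where $w=p\,1\,s'$ is obtained from $x$ by deleting the coordinate $x_{i+1}=0$; the active direction stays at $i$ because the deleted coordinate lies to the right of the $1$.

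Then I would verify that $\phi$ is a bijection by exhibiting its inverse. Given an edge $f=\{w,w+\delta_i\}$ of $\Gamma_{n-1}$ with upper endpoint $w=p\,1\,s'$, so $p\in\Fib_{i-1}^{.0}$ and $s'\in\Fib_{n-1-i}^{0.}$, insert a $0$ immediately after position $i$ to form $x=p\,1\,0\,s'\in\Fib_n$; this is a valid Fibonacci string because $p$ ends in $0$ (or is empty) and $s'$ begins with $0$ (or is empty). Setting $e=\{x,x+\delta_i\}$ and $e'=\{x+\delta_i,(x+\delta_i)+\delta_{i+1}\}$ recovers a couple of $R_{\Gamma_n}$ mapping to $f$, and the two constructions are mutually inverse. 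Summing over $i\in[1,n-1]$, the common range of directions of $\Gamma_{n-1}$, gives the global bijection $R_{\Gamma_n}\to E(\Gamma_{n-1})$. As a consistency check, $|R_{\Gamma_n}|=\sum_{i=1}^{n-1}|\Fib_{i-1}^{.0}|\,|\Fib_{n-1-i}^{0.}|=\sum_{i=1}^{n-1}F_iF_{n-i}=|E(\Gamma_{n-1})|$ by Proposition \ref{nbedges}.

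Finally, for $L_{\Gamma_n}$ I would invoke the reversal map $x_1\ldots x_n\mapsto x_n\ldots x_1$, an automorphism of $\Gamma_n$ that sends an edge of direction $i$ to one of direction $n+1-i$ and thereby interchanges right and left imbalanced partners; it carries $L_{\Gamma_n}$ bijectively onto $R_{\Gamma_n}$, so the same argument (now a mirror-image deletion, removing the forced $0$ at position $i-1$) yields the bijection with $E(\Gamma_{n-1})$. I expect the only delicate points to be the boundary bookkeeping — the vacuous constraint when $i+2>n$, the empty factors coming from $\Fib_0^{0.}=\{\epsilon\}$, and confirming that $e'$ is genuinely redundant so that $\phi$ may be defined on $e$ alone — rather than any real obstacle; the substantive observation is simply that the imbalance condition for a right partner is equivalent to the single extra zero $x_{i+2}=0$, which is exactly the coordinate whose deletion produces $\Gamma_{n-1}$.
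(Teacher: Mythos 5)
Your proposal is correct and follows essentially the same route as the paper: the key bijection in both is deleting the forced zero at position $i+1$ (resp.\ $i-1$) from the upper endpoint, after observing via Lemma \ref{lemmadelta} that a right imbalanced partner exists exactly when $x_{i+2}=0$, with the inverse given by reinserting that zero. Your only departures are cosmetic — packaging the count through the $\Fib_{i-1}^{.0}\times\Fib_{n-i}^{0.}$ parametrization and handling $L_{\Gamma_n}$ by the reversal automorphism instead of writing the mirror map explicitly — and both are sound.
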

\begin{proof}
Let $(e,e')\in R_{\Gamma_n}$. Assume that $x$ is the upper endpoint of $e=\{x,y\}$. We have thus $y=x+\delta_i$ and $x_i=1$ for some $i\in[1,n-1]$.

Let $\theta((e,e'))= \{x_1 x_2 \dots x_{i-1}1x_{i+2}x_{i+3}\dots x_{n},x_1 x_2 \dots x_{i-1}0x_{i+2}x_{i+3}\dots x_{n}\}$.
Since $x$ and $y$ belong to $\Fib_n$ and the edges $e$, $e'$ use the direction $i$, $i+1$ we have $x_k=y_k=0 $ for $k$ in $ \{i-1,i+2\}\cap[1,n]$. Therefore $x_1 x_2 \dots x_{i-1}1x_{i+2}x_{i+3}\dots x_{n}$ is a Fibonacci string and $\theta((e,e'))$ belongs to $E(\Gamma_{n-1})$.\\
Conversely let $f=\{z_1 z_2 \dots z_{i-1}0z_{i+1}z_{i+2}\dots z_{n-1},z_1 z_2 \dots z_{i-1}1z_{i+1}z_{i+2}\dots z_{n-1}\}$ be an arbitrary edge of $\Gamma_{n-1}$ then $z_k=0 $ for $k$ in $ \{i-1,i+1\}\cap[1,n-1]$. Thus $x=z_1 z_2 \dots z_{i-1}10z_{i+1}z_{i+2}\dots z_{n-1}$ and $t=z_1 z_2 \dots z_{i-1}01z_{i+1}z_{i+2}\dots z_{n-1}$ are in $\Fib_n$. The edge $\{t,t+\delta_{i+1}\}$ is a right imbalanced edge for the edge $\{x,x+\delta_{i}\}$. Furthermore $\theta(\{x,x+\delta_i\},\{t,t+\delta_{i+1}\})=f$ and $\theta$ is a bijection.

Similarly let $\phi((e,e'))= \{x_1 x_2 \dots x_{i-2}1x_{i+1}x_{i+2}\dots x_{n},x_1 x_2 \dots x_{i-2}0x_{i+1}x_{i+2}\dots x_{n}\}$ where $x$ is the upper end point of an edge $e$ using the direction $i$ and such that $(e,e')\in L_{\Gamma_n}$. Then $\phi$ is a one to one mapping between $L_{\Gamma_n}$ and $E(\Gamma_{n-1})$.
\end{proof}\qed

As an immediate  corollary we deduce the result of Alizadeh and his co-authors \cite{ali2019} 
\begin{corollary}
$$ irr(\Gamma_n)=2|E(\Gamma_{n-1})|.$$
\end{corollary}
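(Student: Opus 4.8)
The plan is to prove the theorem by exhibiting an explicit bijection for the right case, $\theta\colon R_{\Gamma_n}\to E(\Gamma_{n-1})$, and then to obtain the left case either by the mirror construction or by the reflection $x_1\ldots x_n\mapsto x_n\ldots x_1$, which is an automorphism of $\Gamma_n$ sending direction $i$ to direction $n+1-i$ and hence interchanging right and left imbalanced couples. The geometric idea behind $\theta$ is to \emph{contract} the two consecutive directions $i$ and $i+1$ carried by $e$ and $e'$ into a single direction of $\Gamma_{n-1}$. The discussion preceding the theorem, which already forces the imbalanced edge $e'$ to use direction $i+1$, is exactly what guarantees that these two directions are adjacent and therefore can be merged.

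First I would record the local pattern of a couple $(e,e')\in R_{\Gamma_n}$. Writing $e=\{x,y\}$ with upper endpoint $x$, so that $y=x+\delta_i$ and $x_i=1$, membership $x\in\Fib_n$ forces $x_{i-1}=0$ and $x_{i+1}=0$ whenever these indices lie in $[1,n]$; and the hypothesis that $e'=\{y,y+\delta_{i+1}\}$ is an edge of $\Gamma_n$ forces $x_{i+2}=0$ when $i+2\le n$. The imbalance condition $x+\delta_{i+1}\notin\Fib_n$ then comes for free, because flipping bit $i+1$ of $x$ to $1$ collides with $x_i=1$. Thus around position $i$ the string $x$ reads $\ldots0\,1\,0\,0\ldots$, so the bit in position $i+1$ is a $0$ trapped between forced zeros and may be deleted harmlessly.

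Next I would define $\theta((e,e'))$ by deleting position $i+1$, producing the pair of length-$(n-1)$ strings $x_1\ldots x_{i-1}\,1\,x_{i+2}\ldots x_n$ and $x_1\ldots x_{i-1}\,0\,x_{i+2}\ldots x_n$. The one verification that matters is that the first string is Fibonacci: its bit $i$ is $1$ and its two neighbours are the old $x_{i-1}$ and $x_{i+2}$, both $0$, while the Fibonacci condition elsewhere is inherited from $x$. Hence $\theta((e,e'))$ is the edge of $\Gamma_{n-1}$ in direction $i$. For the inverse I would take an arbitrary edge $f=\{z_1\ldots z_{i-1}\,0\,z_{i+1}\ldots z_{n-1},\,z_1\ldots z_{i-1}\,1\,z_{i+1}\ldots z_{n-1}\}$ of $\Gamma_{n-1}$ in direction $i$, use $z_{i-1}=0$ and $z_{i+1}=0$ to reinsert a $0$ just after position $i$, recovering $x$, $y=x+\delta_i$ and $t=y+\delta_{i+1}$, and then check that $(\{x,x+\delta_i\},\{y,t\})$ is a genuine member of $R_{\Gamma_n}$ mapping to $f$. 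This makes $\theta$ a bijection.

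I expect the only real obstacle to be careful bookkeeping at the boundary indices $i=1$ and $i=n-1$, where one of the forced-zero neighbours of position $i$ leaves $[1,n]$: I must check that the pattern $\ldots0\,1\,0\,0\ldots$, its contraction, and the reinsertion step all stay well defined and create no spurious $11$ elsewhere. The left case is then settled by the analogous map $\phi$ contracting directions $i-1$ and $i$ (deleting the forced zero in position $i-1$), yielding a bijection between $L_{\Gamma_n}$ and $E(\Gamma_{n-1})$; combined with Corollary \ref{corbij} and $irr(\Gamma_n)=|R_{\Gamma_n}|+|L_{\Gamma_n}|$ this will recover $irr(\Gamma_n)=2|E(\Gamma_{n-1})|$.
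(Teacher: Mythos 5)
Your proposal is correct and follows essentially the same route as the paper: the same contraction map $\theta$ deleting the forced zero in position $i+1$ (respectively $i-1$ for the left case), the same reinsertion inverse, the same boundary handling via $\{i-1,i+2\}\cap[1,n]$, combined with Corollary \ref{corbij} and Lemma \ref{lemmadelta}. No substantive difference to report.
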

An other consequence of Lemma \ref{lemmadelta}  is the following classification of the edges according to their imbalance. Note that from this classification we obtain a constant time algorithm for computing the imbalance of an edge of $\Gamma_n$. 
\begin{theorem}
Let $n\geq4$ and $e=\{x,y\}$ be an edge of $\Gamma_n$ using direction $i$. Then  $imb(\{x,y\})$ follows Table \ref{tab:Table1}.
\end{theorem}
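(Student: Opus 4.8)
The plan is to classify each edge $e=\{x,y\}$ of $\Gamma_n$ according to its imbalance by directly counting its imbalanced edges. By Corollary \ref{corbij}, $imb(e)$ equals the number of imbalanced edges for $e$, and by the remark following it, every such imbalanced edge $e'=\{y,y+\delta_j\}$ must satisfy $j=i+1$ or $j=i-1$. Hence $imb(e)\in\{0,1,2\}$, and the entire task reduces to determining, for a given edge using direction $i$, whether a right imbalanced edge exists and whether a left imbalanced edge exists. Each of these is a purely local condition that depends only on the bits of $x$ near position $i$, which is exactly why a constant time algorithm results.

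First I would fix the edge $e=\{x,y\}$ with upper endpoint $x$, so $x_i=1$ and $y_i=0$, and recall that since $x\in\Fib_n$ we already have $x_{i-1}=x_{i+1}=0$ (when these indices lie in $[1,n]$). The existence of a right imbalanced edge $e'=\{y,y+\delta_{i+1}\}$ requires two things: that $y+\delta_{i+1}\in\Fib_n$, and that $x+\delta_{i+1}\notin\Fib_n$. Because $y_i=0$, the string $y+\delta_{i+1}$ is a Fibonacci string precisely when $y_{i+2}=0$ (the bit at $i+1$ is being set to $1$, and its left neighbour $y_i=0$ is automatic); whereas $x+\delta_{i+1}\notin\Fib_n$ holds precisely because $x_i=1$ forces a forbidden $11$ when position $i+1$ is set. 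So a right imbalanced edge exists if and only if $i+1\le n$ and $x_{i+2}=0$ (interpreting the condition at the boundary, i.e. when $i+2>n$, as automatically satisfied). The symmetric analysis gives that a left imbalanced edge exists if and only if $i-1\ge1$ and $x_{i-2}=0$.

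I would then organize these two independent local conditions into the rows of Table \ref{tab:Table1}, splitting into cases according to whether $i$ is near the boundary (so that $i-1$ or $i+1$ falls outside $[1,n]$) or in the interior, and according to the values of $x_{i-2}$ and $x_{i+2}$. The imbalance is then $0$, $1$, or $2$ depending on how many of the two conditions hold. The hypothesis $n\ge4$ is what guarantees that the interior cases, where both neighbours at distance two exist, are genuinely present and that the table exhausts all configurations without degenerate overlaps. Once the correspondence between each table row and the count of imbalanced edges is verified, the constant time algorithm is immediate: one inspects only the bounded window $x_{i-2}x_{i-1}x_ix_{i+1}x_{i+2}$ of $x$ to read off $imb(e)$.

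The main obstacle, and the only place where care is genuinely needed, is the correct handling of the boundary directions $i=1,2,n-1,n$, where one or both of the positions $i-2$ or $i+2$ fall outside $[1,n]$ and the corresponding imbalanced edge is forced to exist or fail to exist for index reasons rather than bit-value reasons. I would treat these boundary directions as separate explicit rows of the table, checking each against the definition, rather than trying to fold them into a single uniform formula. Everything else follows mechanically from the two local equivalences established using Lemma \ref{lemmadelta} together with the Fibonacci constraint $x_{i-1}=x_{i+1}=0$.
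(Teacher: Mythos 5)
Your proposal is correct and follows essentially the same route as the paper: reduce $imb(e)$ to counting imbalanced edges via Corollary \ref{corbij}, restrict the candidates to $j=i\pm1$ via Lemma \ref{lemmadelta}, and observe that the existence of each candidate is governed solely by $x_{i-2}$ and $x_{i+2}$ (with the boundary directions handled as vacuously satisfied conditions). Your explicit verification that $x+\delta_{i\pm1}\notin\Fib_n$ holds automatically because $x_i=1$ is a small point the paper leaves implicit, but the argument is otherwise identical.
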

\begin{table}[ht]
	\centering
		\begin{tabular}{l|c|c|c c|c|r}
		$imb(\{x,x+\delta_i\})$& $i=1$ & $i=2$ &\multicolumn{2}{|c|}{$3\leq i \leq n-2$}  & $i=n-1$ & $i=n$\\ \hline
		& $x_3$ & $x_4$ & $x_{i-2}$&$x_{i+2}$ & $x_{n-3}$ & $x_{n-2}$\\ \hline
		0 & 1 & & 1 & 1 & & 1 \\ \hline
		1 & 0 & 1& $\begin{matrix} 0 \\ 1 \end{matrix}$ & $\begin{matrix} 1 \\0 \end{matrix}$ & 1 & 0\\ \hline
		2 && 0 & 0 & 0 &0& \\ \hline
			
		\end{tabular}
	\caption{$imb(e)$ in $\Gamma_n$ }
	\label{tab:Table1}
\end{table}
\begin{proof}
Assume that $x$ is the upper endpoint of the edge $e=\{x,y\}$.
There exists an edge $e'$ such that $e'$ is a right imbalanced edge for $e$ if and only if $i\in[1,n-1]$ and $e'=\{y,y+\delta_{i+1}\}$ thus if $y+\delta_{i+1}\in\Fib_n$. Since $y_i=0$,  $y+\delta_{i+1}$ is a Fibonacci string  if and only if $i=n-1$  or if $y_{i+2}=x_{i+2}=0$ in the general case $i\in[1,n-2]$.

Similarly there exists an edge $e'$ such that $e'$ is a left imbalanced edge for $e$ if and only if $i\in[2,n]$ and $e'=\{y,y+\delta_{i-1}\}$ thus if $y+\delta_{i-1}\in\Fib_n$. Since $y_i=0$,  $y+\delta_{i-1}$ is a Fibonacci string  if and only if $i=2$  or if $y_{i-2}=x_{i-2}=0$ when $i\in[3,n]$.

Therefore $imb(e)$ is completely determined by the values of $x_{i+2},x_{{i-2}}$ according to Table \ref{tab:Table1}.
\end{proof}\qed\\
Let $e$ be an edge of $\Gamma_n$ then by Lemma \ref{lemmadelta}  $imb(e)\leq2$.
Let $A$, $B$, $C$ be the sets of edges with $imb(e)=0$, $imb(e)=1$ and $imb(e)=2$, respectively.
\begin{theorem}\label{thfib}
Let $n\geq2$. The numbers of edges of $\Gamma_n$ with imbalance 0,1 and 2 are respectively
$$|A|=\sum_{i=3}^{n-2}{F_{i-2}F_{n-i-1}}+2F_{n-2}$$
$$|B|=2\sum_{i=1}^{n-3}{F_{i}F_{n-i-2}}+2F_{n-1}$$
$$|C|=\sum_{i=2}^{n-1}{F_{i-1}F_{n-i}}.$$
\end{theorem}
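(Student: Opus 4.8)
The plan is to count the three classes $A,B,C$ one direction at a time, using the bijection $\theta$ of the Remark. For a fixed direction $i$, $\theta$ identifies the edges of $\Gamma_n$ using direction $i$ with the product $\Fib_{i-1}^{.0}\times\Fib_{n-i}^{0.}$; I write the upper endpoint as $x=u\,1\,v$ with $u=x_1\ldots x_{i-1}\in\Fib_{i-1}^{.0}$ and $v=x_{i+1}\ldots x_n\in\Fib_{n-i}^{0.}$. The crucial structural fact, already extracted in the proof of Table \ref{tab:Table1}, is that a \emph{left} imbalanced edge exists iff $i\ge 2$ and ($i=2$ or $x_{i-2}=0$), a condition on $u$ alone, while a \emph{right} imbalanced edge exists iff $i\le n-1$ and ($i=n-1$ or $x_{i+2}=0$), a condition on $v$ alone. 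Hence, for fixed $i$, the two events ``has a left edge'' and ``has a right edge'' are independent, so I may count admissible left parts and right parts separately and multiply.

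Next I compute the counts. Let $a_i$ (resp.\ $b_i$) be the number of $u\in\Fib_{i-1}^{.0}$ (resp.\ $v\in\Fib_{n-i}^{0.}$) admitting a left (resp.\ right) imbalanced edge. I claim $a_i=F_{i-1}$ and $b_i=F_{n-i}$. For $i\ge 3$ an admissible $u$ is a Fibonacci string of length $i-1$ ending in $00$, so its length-$(i-3)$ prefix is arbitrary and there are $|\Fib_{i-3}|=F_{i-1}$ of them; the boundary case $i=2$ gives the single string $0$ (again $F_1=1$) and $i=1$ gives $0=F_0$. The value $b_i=F_{n-i}$ is the mirror computation (strings beginning in $00$, plus the $i=n-1$ boundary). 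Consequently the numbers with \emph{no} left/right edge are, in the interior, $\bar a_i=F_i-F_{i-1}=F_{i-2}$ and $\bar b_i=F_{n-i+1}-F_{n-i}=F_{n-i-1}$, the only exceptional values being $\bar a_1=1$ and $\bar b_n=1$. As a sanity check, $(a_i+\bar a_i)(b_i+\bar b_i)=F_iF_{n-i+1}$, so $|A|+|B|+|C|=|E(\Gamma_n)|$ automatically.

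By independence, among edges using direction $i$ exactly $a_ib_i$ have imbalance $2$, exactly $\bar a_i\bar b_i$ have imbalance $0$, and exactly $a_i\bar b_i+\bar a_ib_i$ have imbalance $1$. Summing over $i$ gives $|C|=\sum_{i=1}^n F_{i-1}F_{n-i}$, whose only nonzero terms run over $2\le i\le n-1$, which is the stated formula. For $|A|=\sum_i\bar a_i\bar b_i$, the boundary indices $i=1$ and $i=n$ each contribute $F_{n-2}$, the indices $i=2,n-1$ contribute $0$, and the interior $3\le i\le n-2$ contributes $\sum F_{i-2}F_{n-i-1}$, yielding $|A|=2F_{n-2}+\sum_{i=3}^{n-2}F_{i-2}F_{n-i-1}$. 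For $|B|=\sum_i(a_i\bar b_i+\bar a_ib_i)$, I evaluate the two partial sums to $F_{n-1}+\sum_{i=2}^{n-2}F_{i-1}F_{n-i-1}$ and $F_{n-1}+\sum_{i=3}^{n-1}F_{i-2}F_{n-i}$; the substitution $i\mapsto i-1$ shows these two interior sums coincide, so after reindexing $k=i-1$ I obtain $|B|=2F_{n-1}+2\sum_{k=1}^{n-3}F_kF_{n-k-2}$.

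The only delicate part is the bookkeeping at the four boundary directions $i\in\{1,2,n-1,n\}$: there the convenient description of an admissible left/right part as ``a Fibonacci string ending/beginning in $00$'' degenerates, and one must instead account for the single-letter string $0$ (this is exactly where a naive interior formula would force the unwanted value $F_{-1}$). Once the independence of the two conditions and the counts $a_i=F_{i-1}$, $b_i=F_{n-i}$ are established, everything reduces to separating those boundary terms and recognizing the remaining interior sums as the claimed Fibonacci convolutions.
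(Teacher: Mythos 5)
Your proof is correct and follows essentially the same route as the paper's: both fix the direction $i$, use the Table \ref{tab:Table1} characterization that the existence of a left (resp.\ right) imbalanced edge depends only on $x_{i-2}$ (resp.\ $x_{i+2}$), and count Fibonacci prefixes/suffixes with the appropriate boundary conditions. The only difference is organizational: you count admissible halves and their complements and multiply by independence, where the paper writes an explicit bijection for each of $A_i$, $B_i$, $C_i$ onto products such as $\Fib_{i-2}^{.1}\times \Fib_{n-i-1}^{1.}$; the resulting sums and boundary bookkeeping are identical.
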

\begin{remark}
Note that  $|B|+2|C|=2|E(\Gamma_{n-1})|$ and we obtain again the result of Alizadeh and his co-authors.
\end{remark}
\begin{proof}
The case $n\leq3$ is obtained by direct inspection.\\
Assume $n\geq4$.\\
For $i\in[1,n]$ let $E_i$ be the set of edges $\{x,y\}$ of $\Gamma_i$ with $y=x+\delta_i$. Let $A_i=A\cap E_i$, $B_i=B\cap E_i$ and $C_i=C\cap E_i$.
Let $e=\{x,y\}$ be an edge of $\Gamma_n$.
\begin{itemize}
\item If $e\in A_i$ then by Table \ref{tab:Table1} we have  $i\in [3,n-2]$ or $i\in\{1,n\}$.
If $i\in [3,n-2]$ then $\theta(e)=(x_{1}x_{2}\ldots x_{i-2}, x_{i+2} x_{i+3} \ldots x_{n}) $ is a one to one mapping between $A_i$ and $\Fib_{i-2}^{.1}\times \Fib_{n-i-1}^{1.}$.\\
If $i=1$  then $\phi(e)=x_{3} x_{4} \ldots x_{n} $ is a one to one mapping between  $A_1$ and $\Fib_{n-2}^{1.}$. Similarly $\Psi(e)=x_{1} x_{2} \ldots x_{n-2} $ is a one to one mapping between  $A_n$ and $\Fib_{n-2}^{.1}$. By Proposition \ref{nbstring} we obtain $|A|=\sum_{i=3}^{n-2}{F_{i-2}F_{n-i-1}}+2F_{n-2}$.

\item If $e\in C_i$ then by Table \ref{tab:Table1} we have  $i\in [2,n-1]$.
Let $\theta(e)=(x_{1}x_{2}\ldots x_{i-2},x_{i+2} x_{i+3} \ldots x_{n}) $. Then $\theta$  is a one to one mapping between $C_i$ and $\Fib_{i-2}^{.0}\times \Fib_{n-i-1}^{0.}$. The expression of $|C|$ follows.

\item Assume $e\in B_i$ and that there exists a right imbalanced edge for $e$ therefore no left imbalanced edge. We have thus $i\in[1,n-1]$ and $i\neq 2$.
If $i\in [3,n-1]$ then $\theta(e)=(x_{1}x_{2}\ldots x_{i-2}, x_{i+2} x_{i+3} \ldots x_{n}) $ is a one to one mapping this kind of edges and  and $\Fib_{i-2}^{.1}\times \Fib_{n-i-1}^{0.}$.\\
If $i=1$ then $\phi(e)=x_{3} x_{4} \ldots x_{n} $ is a one to one mapping between this kind of edges and $\Fib_{n-2}^{0.}$. Thus this case contributes $\sum_{i=3}^{n-1}{F_{i-2}F_{n-i}}+F_{n-1}=\sum_{i=1}^{n-3}{F_{i}F_{n-i-2}}+F_{n-1}$ to $B$.

\item Assume $e\in B_i$ and that there exists a left imbalanced edge for $e$ thus  no right imbalanced edge. By a similar construction this case contributes also $\sum_{i=1}^{n-3}{F_{i}F_{n-i-2}}+F_{n-1}$ to $B$. The expression of $|B|$ follows.

\end{itemize}

\end {proof}

\section{Edges in Lucas cube}

For any integer $i$ let $\bm{i}=((i-1)\mod n) +1$. Thus $\bm{i}=i$ for $i\in[1,n]$ and $\bm{n+1}=1$, $\bm{0}=n$. With this notation $i$ and  $\bm{i+1}$ are cyclically consecutive in $[1,n]$. Therefore for $x\in\Luc_n$ with $x_i=0$  the string $x+\delta_i$ belongs to $\Luc_n $ if and only if $x_k=0$ for all $k\in\{\bm{i-1},\bm{i+1}\}$. Note also that $k\in\{\bm{i-1},\bm{i+1}\}$ if and only if $i\in\{\bm{k-1},\bm{k+1}\}$
\begin{lemma}\label{lemmazerol}
Let $x,y$ be two strings in $\Luc_n$ with $y=x+\delta_i$ and $x_i=1$. Then for all $j\in[1,n]$ we have
 $$ x+\delta_j\in \Luc_n \text { implies }y+\delta_j\in \Luc_n.$$
 \end{lemma}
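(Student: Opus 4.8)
The plan is to follow the structure of the proof of Lemma \ref{lemmazero}, replacing ordinary adjacency $\{j-1,j+1\}\cap[1,n]$ by the cyclic adjacency $\{\bm{j-1},\bm{j+1}\}$ introduced just above the lemma. The one observation driving everything is that $y$ is obtained from $x$ by changing the single coordinate $x_i=1$ into $y_i=0$; hence $y_p=x_p$ for all $p\neq i$, and in particular the support of $y$ is contained in that of $x$, i.e. $x_p=0$ implies $y_p=0$ for every $p\in[1,n]$.

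I would argue by contraposition: assume $y+\delta_j\notin\Luc_n$ and deduce $x+\delta_j\notin\Luc_n$. Since $y\in\Luc_n$, turning a $1$ into a $0$ can never create two cyclically consecutive $1$'s, so the failure must come from turning a $0$ into a $1$; thus $y_j=0$ and, by the cyclic characterization recalled before the lemma, there is some $k\in\{\bm{j-1},\bm{j+1}\}$ with $y_k=1$.

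From $y_k=1$ and $y_i=0$ we get $k\neq i$, whence $x_k=y_k=1$. Moreover $j\neq i$: otherwise $y+\delta_j=y+\delta_i=x\in\Luc_n$, contradicting the assumption. Consequently $x_j=y_j=0$, so $x+\delta_j$ switches position $j$ from $0$ to $1$ while $x_k=1$ sits cyclically adjacent to $j$; this produces two cyclically consecutive $1$'s and shows $x+\delta_j\notin\Luc_n$, completing the contraposition.

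I expect no serious obstacle: the argument is essentially a verbatim transcription of Lemma \ref{lemmazero} with cyclic indices. The only points requiring care are bookkeeping ones, namely verifying $k\neq i$ and $j\neq i$ so that the monotonicity $x_p=0\Rightarrow y_p=0$ applies at $k$ and so that $x+\delta_j$ genuinely turns a $0$ into a $1$, together with using the wraparound convention $\bm{0}=n$ and $\bm{n+1}=1$ correctly at the two ends of the string.
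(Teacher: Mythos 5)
Your proof is correct and follows essentially the same contrapositive argument as the paper: from $y+\delta_j\notin\Luc_n$ extract a cyclically adjacent $k$ with $y_k=1$, use $x_p=0\Rightarrow y_p=0$ to get $x_k=1$, and conclude $x+\delta_j\notin\Luc_n$. The extra bookkeeping you supply ($k\neq i$, $j\neq i$) is left implicit in the paper but is a harmless and correct elaboration.
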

\begin{proof}
 Assume  $y+\delta_j\notin \Luc_n$ then   $y_k=1$ for some $k$ in  $\{\bm{j-1},\bm{j+1}\}$. But for all $ p\in[1,n]$ $x_p=0$ implies  $y_p=0$. Thus $x_k=1$ and   $x+\delta_j\notin \Luc_n$.
\end{proof}\qed

\begin{lemma}\label{lemmadeltal}
Let $x,y$ two strings in $\Luc_n$ with $y=x+\delta_i$.
Then for all $j\in[1,n]$ with $j\notin\{\bm{i-1},\bm{i+1}\}$ we have $$x+\delta_j\in \Luc_n\text { if and only if }y+\delta_j\in \Luc_n.$$
 \end{lemma}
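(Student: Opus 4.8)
The plan is to mirror the proof of Lemma~\ref{lemmadelta}, replacing the linear neighbour set $\{j-1,j+1\}\cap[1,n]$ by the cyclic neighbour set $\{\bm{j-1},\bm{j+1}\}$ and invoking the characterization recorded just before the statement: for a Lucas string $z$ with $z_j=0$, one has $z+\delta_j\in\Luc_n$ if and only if $z_k=0$ for every $k\in\{\bm{j-1},\bm{j+1}\}$. Since $x$ and $y$ differ only in coordinate $i$, the whole argument reduces to checking that the two relevant neighbour conditions, one for $x$ and one for $y$, involve only coordinates on which $x$ and $y$ agree.

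First I would dispose of the degenerate case $j=i$, which the hypothesis does not exclude: here $x+\delta_i=y$ and $y+\delta_i=x$ are both Lucas strings, so the biconditional holds trivially. Hence I may assume $j\neq i$, and then $x_j=y_j$ because $x$ and $y$ agree off coordinate $i$. I would next split on this common value. If $x_j=y_j=1$, then both $x+\delta_j$ and $y+\delta_j$ are obtained by turning a $1$ into a $0$; deleting a $1$ can never create two cyclically consecutive $1$'s, so both strings lie in $\Luc_n$ and the equivalence holds.

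The substantive case is $x_j=y_j=0$. Here the characterization gives
$$x+\delta_j\in\Luc_n \iff x_k=0 \text{ for all } k\in\{\bm{j-1},\bm{j+1}\},$$
$$y+\delta_j\in\Luc_n \iff y_k=0 \text{ for all } k\in\{\bm{j-1},\bm{j+1}\}.$$
Because $x$ and $y$ agree off coordinate $i$, these two right-hand conditions coincide as soon as $i\notin\{\bm{j-1},\bm{j+1}\}$, which then yields the desired equivalence.

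The one point that genuinely uses the cyclic structure, and the only place where the argument departs from the Fibonacci case, is to deduce $i\notin\{\bm{j-1},\bm{j+1}\}$ from the hypothesis $j\notin\{\bm{i-1},\bm{i+1}\}$. This is exactly the symmetry noted before the lemma, namely that $k\in\{\bm{i-1},\bm{i+1}\}$ if and only if $i\in\{\bm{k-1},\bm{k+1}\}$; applying it with $k=j$ converts the hypothesis into the required statement. With $i\notin\{\bm{j-1},\bm{j+1}\}$ in hand, $x_k=y_k$ holds for every $k\in\{\bm{j-1},\bm{j+1}\}$, the two membership conditions are identical, and the equivalence follows. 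I do not expect a real obstacle here: the only care required is the modular bookkeeping around the wrap-around positions $1$ and $n$, which the notation $\bm{\cdot}$ and the preliminary symmetry remark are precisely designed to absorb.
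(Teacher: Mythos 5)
Your proposal is correct and follows essentially the same route as the paper's own proof: dispose of $j=i$, note $x_j=y_j$ for $j\neq i$, treat the case $x_j=1$ trivially, and in the case $x_j=0$ use the stated characterization of when $z+\delta_j\in\Luc_n$ together with the symmetry $j\in\{\bm{i-1},\bm{i+1}\}\iff i\in\{\bm{j-1},\bm{j+1}\}$ to see that the two conditions involve only coordinates where $x$ and $y$ agree. Your only departure is that you make the appeal to that symmetry remark explicit, which the paper leaves implicit.
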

\begin{proof}
This is true for $j=i$ thus assume $j\neq i$.
\begin{itemize}
\item If $x_j=1$  then $y_j=x_j=1$ and both $x+\delta_j$ and $y+\delta_j$ belong to $\Luc_n$.
\item Assume  $x_j=0$ thus $y_j=0$. We have
$$x+\delta_j\in\Luc_n\text{ if and only if } x_{k}=0 \text{ for all } k\in \{\bm{j-1},\bm{j+1}\}$$ and
$$y+\delta_j\in\Luc_n\text{ if and only if } y_{k}=0 \text{ for all } k\in \{\bm{j-1},\bm{j+1}\}.$$
But $i\notin\{\bm{j-1},\bm{j+1}\}$ thus $x_k=y_k$ for all $k$ in this set and the two conditions are equivalent.
\end{itemize}
\end{proof} \qed

From this two lemmas we deduce the equivalent for Lucas cube of Corollary \ref{corbij}. 
\begin{corollary}
Let $n\geq2$ then $irr(\Lambda_n)$ is the number of couples $(e,e')\in E(\Lambda_n)^2$ where $e'$ is an imbalanced edge for $e$.
\end{corollary}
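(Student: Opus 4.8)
The plan is to mirror exactly the argument already used for the Fibonacci cube in Corollary \ref{corbij}, now relying on the cyclic analogues Lemma \ref{lemmazerol} and Lemma \ref{lemmadeltal} in place of Lemma \ref{lemmazero} and Lemma \ref{lemmadelta}. First I would fix an edge $e=\{x,y\}$ of $\Lambda_n$ using direction $i$, with $x$ the upper endpoint, so that $y=x+\delta_i$ and $x_i=1$. The goal is to show that the imbalance $imb(e)=|d(x)-d(y)|$ equals the number of imbalanced edges for $e$, and that summing over all edges yields the stated count of couples $(e,e')$.

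The key step is to establish that $d(y)\geq d(x)$, which comes directly from Lemma \ref{lemmazerol}: for every direction $j$, if $x+\delta_j\in\Luc_n$ then $y+\delta_j\in\Luc_n$, so every neighbor of $x$ (other than along direction $i$) forces a corresponding neighbor of $y$. Hence the neighbors of $x$ inject into the neighbors of $y$, giving $d(x)\leq d(y)$ and therefore $imb(e)=d(y)-d(x)$. Next I would identify this difference with the number of imbalanced edges for $e$: a direction $j$ contributes to $d(y)$ but not to $d(x)$ precisely when $y+\delta_j\in\Luc_n$ while $x+\delta_j\notin V(\Lambda_n)$, which is exactly the definition of $e'=\{y,y+\delta_j\}$ being an imbalanced edge for $e$. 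Thus $imb(e)$ counts the imbalanced edges for each $e$, and $irr(\Lambda_n)=\sum_{e}imb(e)$ counts all couples $(e,e')$ as claimed.

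The only place where care is needed is the cyclic boundary convention: in the Lucas cube the relevant obstructing positions are $\{\bm{i-1},\bm{i+1}\}$ rather than $\{i-1,i+1\}\cap[1,n]$, so the wrap-around cases $i=1$ and $i=n$ must be handled by the same $\bm{i}$ notation introduced just before Lemma \ref{lemmazerol}. This is precisely why the two cyclic lemmas are phrased with $\bm{i\pm 1}$, and it is the main (though minor) obstacle: one must verify that Lemma \ref{lemmazerol} already incorporates the cyclic adjacency so that the injection of neighbors of $x$ into neighbors of $y$ remains valid at the string endpoints. Once that is observed, the argument is a verbatim transcription of the proof of Corollary \ref{corbij}, and the conclusion follows immediately.
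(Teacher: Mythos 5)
Your proposal is correct and follows essentially the same route as the paper: the paper simply notes that the Lucas-cube corollary follows from Lemmas \ref{lemmazerol} and \ref{lemmadeltal} exactly as Corollary \ref{corbij} follows from Lemma \ref{lemmazero}, which is what you carry out. Your extra remark about the cyclic indices $\bm{i\pm 1}$ is a fair point of care but is already absorbed into the statement of Lemma \ref{lemmazerol}, so nothing further is needed.
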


Let $e'=\{y,y+\delta_j\}$ be an imbalanced edge for $e$ then by Lemma \ref{lemmadeltal} we have $j=\bm{i+1}$ or $j=\bm{i-1}$. We will call $e'$ a cyclically right or cyclically left imbalanced edge for $e$ accordingly. Let $R^i_{\Lambda_n}$ be the set of $(e,e')$ where $e'$ is a cyclically right imbalanced edge for $e$ and $e$ uses the direction $i$. Similarly let $L^i_{\Lambda_n}$ be the equivalent set for cyclically left imbalanced edges. 

\begin{theorem}
Let $n\geq4$ and $i\in[1,n]$. There exists a one to one mapping between $R^i_{\Lambda_n}$ or $L^i_{\lambda_n}$ and $\Fib_{n-4}$.
\end{theorem}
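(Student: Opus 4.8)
The plan is to construct an explicit bijection between $R^i_{\Lambda_n}$ and $\Fib_{n-4}$ mirroring the Fibonacci-cube argument from the previous section, and then handle $L^i_{\Lambda_n}$ by the obvious cyclic symmetry. Fix a couple $(e,e')\in R^i_{\Lambda_n}$ where $e=\{x,y\}$ uses direction $i$ with upper endpoint $x$ (so $x_i=1$, $y=x+\delta_i$) and $e'=\{y,y+\delta_{\bm{i+1}}\}$ is a cyclically right imbalanced edge. First I would record the forced zeros: since $x\in\Luc_n$ has $x_i=1$, the cyclic no-$11$ condition gives $x_{\bm{i-1}}=x_{\bm{i+1}}=0$; since $e'$ uses direction $\bm{i+1}$ on $y$ (which has $y_i=0$), the string $y+\delta_{\bm{i+1}}\in\Luc_n$ forces $y_{\bm{i+2}}=x_{\bm{i+2}}=0$ as well. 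So the four cyclically consecutive positions $\bm{i-1},i,\bm{i+1},\bm{i+2}$ of $x$ read $0,1,0,0$, and these four positions are pinned down by the data $(e,e')$.

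The idea of the map is to delete these four positions and collapse them into a single controlled marker, leaving a Fibonacci string of length $n-4$. Concretely I would delete positions $\bm{i-1},i,\bm{i+1},\bm{i+2}$ from $x$ and read off the remaining $n-4$ coordinates in cyclic order starting just after $\bm{i+2}$; because the deleted block is bordered by $0$'s on both cyclic sides (position $\bm{i-1}$ is a $0$, and position $\bm{i+2}$ is a $0$), the concatenation of the surviving coordinates has no new $11$ and, crucially, no wrap-around $11$, so it is an honest element of $\Fib_{n-4}$ rather than merely a Lucas string. This is the step where the cyclic setting does real work: cutting out a length-four window that begins and ends in $0$ severs the cyclic adjacency, which is exactly why the target is the linear object $\Fib_{n-4}$ and not $\Luc_{n-4}$.

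For the inverse I would take an arbitrary $w\in\Fib_{n-4}$, reinsert the block $010\!\cdot\!0$ pattern back at the appropriate cyclic location, and reconstruct $x$, $y=x+\delta_i$, and the partner $t$ on which the imbalanced edge lives; since $w$ already avoids $11$ and the inserted block is $0$-bordered, the reconstructed string lies in $\Luc_n$ and the two rebuilt edges form a genuine element of $R^i_{\Lambda_n}$. Checking that these two constructions invert each other is the routine verification that seals the bijection. The cyclically left case is symmetric: reflecting the index map $k\mapsto\bm{n+1-k}$ interchanges $\bm{i+1}$ with $\bm{i-1}$ and carries $R^i_{\Lambda_n}$ to $L^i_{\Lambda_n}$, so the same count applies and $|L^i_{\Lambda_n}|=|\Fib_{n-4}|$ follows without a separate argument.

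The main obstacle I anticipate is the bookkeeping of cyclic indices near the seam, especially verifying the no-wrap-around condition: I must make sure that after excising positions $\bm{i-1},i,\bm{i+1},\bm{i+2}$ the two coordinates that become newly adjacent across the cut are not both $1$, and that the $n\ge4$ hypothesis guarantees the four excised positions are genuinely distinct (so that for small $n$ the window does not overlap itself). Handling the boundary indices $i\in\{1,2,n-1,n\}$ uniformly via the $\bm{\cdot}$ notation, rather than splitting into cases, should keep this manageable, but it is the part that demands care.
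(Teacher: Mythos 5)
Your construction is correct and is essentially the paper's own proof: after rotating so that $i=1$ via the cyclic automorphism of $\Lambda_n$, the paper defines $\theta((e,e'))=x_4x_5\dots x_{n-1}$, which is exactly your deletion of the four forced positions $\bm{i-1},i,\bm{i+1},\bm{i+2}$ carrying the pattern $0100$, with the inverse given by reinserting that $0$-bordered block around an arbitrary $w\in\Fib_{n-4}$ (the paper treats $L^i_{\Lambda_n}$ by the mirror map $\phi((e,e'))=x_3\dots x_{n-2}$ rather than your reflection symmetry, a cosmetic difference). One small simplification: no wrap-around check is needed for the forward map, since excising the window already separates the two ends of the surviving string and $\Fib_{n-4}$ imposes no cyclic condition; the $0$-border only matters for surjectivity, exactly as you use it.
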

\begin{proof}
Since $x_1 x_2\dots x_n\mapsto x_i x_{i+1}\dots x_n x_1 x_2 \dots x_{i-1}$ is an automorphism of $\Lambda_n$ we can assume without loss of generality that $i=1$. 
Let $(e,e')$ in $R^1_{\Lambda_n}$. Assume that $x$ is the upper endpoint of $e=\{x,y\}$. We have thus $y=x+\delta_1$ and $x_1=1$. Let $\theta((e,e'))= x_4 x_5 \dots x_{n-1}$. As a substring of $x$ the string $x_4 x_5 \dots x_{n-1}$ belongs to $\Fib_{n-4}$. Furthermore since $e$ and $e'$ use the directions $1$ and $2$ we have $x_n=x_2=x_3=0$. Therefore  $\theta((e,e'))= x_4 x_5 \dots x_{n-1}$ defines  $x$ thus defines $(e,e')$  and $\theta$ is injective.    
Conversely let $z_1 z_2 \dots z_{n-4}$ be an arbitrary string of $\Fib_{n-4}$. Let $x=100z_1 z_2\dots z_{n-4}0$,  $t=010z_1 z_2\dots z_{n-4}0$, $e=\{x,x+\delta_1\}$ and $e'=\{t,t+\delta_2\}$. Note that $t+\delta_2=x+\delta_1$ and $x+\delta_2\notin \Luc_n$ thus by Lemma \ref{lemmadeltal} $(e,e')\in R^1_{\Lambda_n}$. Therefore $\theta$  is surjective.
The proof that $\phi((e,e'))= x_3 x_4 \dots x_{n-2}$ where $e=\{x,x+\delta_1\}$ defines a one to one mapping between $L^1_{\lambda_n}$ and $\Fib_{n-4}$ is similar. 
\end{proof}\qed

As an immediate  corollary we deduce the result obtained in \cite{egec2020} 
\begin{corollary}
For all $n\geq 3$ $irr(\Lambda_n)= 2n|F_{n-2}|$.
\end{corollary}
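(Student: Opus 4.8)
The plan is to combine the bijection theorem just proved with the Lucas-cube counterpart of Corollary \ref{corbij}, after which only a short count remains. First I would recall that counterpart, which states that $irr(\Lambda_n)$ equals the number of couples $(e,e')\in E(\Lambda_n)^2$ for which $e'$ is an imbalanced edge for $e$. The whole task is therefore to enumerate these imbalanced couples, and I would do so by sorting them according to the direction used by $e$ and by the cyclic side on which $e'$ sits.

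Next I would set up the relevant partition. Every imbalanced couple $(e,e')$ has $e$ using a unique direction $i\in[1,n]$, so the couples split into $n$ families indexed by $i$. Within the family for a fixed $i$, Lemma \ref{lemmadeltal} forces $e'=\{y,y+\delta_j\}$ to satisfy $j\in\{\bm{i-1},\bm{i+1}\}$, so $e'$ is either cyclically right ($j=\bm{i+1}$, giving $(e,e')\in R^i_{\Lambda_n}$) or cyclically left ($j=\bm{i-1}$, giving $(e,e')\in L^i_{\Lambda_n}$). For $n\geq3$ the indices $\bm{i+1}$ and $\bm{i-1}$ are distinct, since their equality would force $n\mid 2$; hence each couple lands in exactly one of $R^i_{\Lambda_n}$ or $L^i_{\Lambda_n}$, and the $2n$ sets are pairwise disjoint. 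The set of all imbalanced couples is thus the disjoint union of the $R^i_{\Lambda_n}$ and the $L^i_{\Lambda_n}$.

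Assuming $n\geq4$, I would then apply the theorem to get $|R^i_{\Lambda_n}|=|L^i_{\Lambda_n}|=|\Fib_{n-4}|$ for each $i$, and Proposition \ref{nbstring} gives $|\Fib_{n-4}|=F_{n-2}=|V(\Gamma_{n-4})|$. Summing over the $n$ directions yields
$$irr(\Lambda_n)=\sum_{i=1}^n\bigl(|R^i_{\Lambda_n}|+|L^i_{\Lambda_n}|\bigr)=\sum_{i=1}^n 2F_{n-2}=2nF_{n-2},$$
which is the asserted equality for $n\geq4$. The remaining value $n=3$ (for which $\Fib_{n-4}$ is not defined and the theorem does not apply) I would settle by direct inspection: $\Lambda_3$ is the star $K_{1,3}$ centred at $000$, so $irr(\Lambda_3)=3\cdot|3-1|=6=2\cdot3\cdot F_1$, in agreement with the formula.

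The only real care needed is in the disjoint-partition step. Disjointness across different $i$ is automatic because $e$ determines $i$, but disjointness of $R^i_{\Lambda_n}$ from $L^i_{\Lambda_n}$ within a single $i$ is exactly where the hypothesis $n\geq3$ is used, through $\bm{i+1}\neq\bm{i-1}$. Everything else is bookkeeping that follows immediately from the stated results, so I expect no serious obstacle beyond getting this counting structure right.
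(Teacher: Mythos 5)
Your proof is correct and follows the same route the paper intends: the paper states the corollary as an immediate consequence of the bijection theorem, i.e.\ summing $|R^i_{\Lambda_n}|+|L^i_{\Lambda_n}|=2F_{n-2}$ over the $n$ directions. Your explicit verification of the disjointness of the $2n$ sets and your separate check of the $n=3$ case (where the theorem's hypothesis $n\geq4$ fails) are welcome details that the paper leaves implicit.
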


Like in $\Gamma_n$ it is not necessary to know the degree of his endpoints for computing the imbalance of an edge in $\Lambda_n$.
\begin{theorem}
Let $n\geq 4$ and $e=\{x,y\}$ be an edge of $\Lambda_n$ with $y= x+\delta_i$. Then  $imb(e)$ follows Table \ref{tab:Table2} where their indices $\bm{i-2}$ and $\bm{i+2}$ are taken cyclically in $[1,n]$.
\begin{table}[ht]
	\centering
		\begin{tabular}{l|c|r}
		$imb(\{x,x+\delta_i\})$& $x_{\bm{i-2}} $ & $x_{\bm{i+2}}$\\ \hline
		0 & 1 &  1  \\ \hline
		1 & $\begin{matrix} 0 \\ 1 \end{matrix}$ & $\begin{matrix} 1 \\0 \end{matrix}$ \\ \hline
		2 & 0 & 0\\ \hline
			
		\end{tabular}
	\caption{$imb(e)$ in $\Lambda_n$ }
	\label{tab:Table2}
\end{table}
\end{theorem}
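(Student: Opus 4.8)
The plan is to mirror the analysis behind Table~\ref{tab:Table1}, replacing the linear neighbour relations by their cyclic counterparts. Assume throughout that $x$ is the upper endpoint of $e$, so that $y=x+\delta_i$ and $x_i=1$. By Lemma~\ref{lemmazerol} we then have $d(y)\ge d(x)$, hence $imb(e)$ equals the number of imbalanced edges for $e$; and by Lemma~\ref{lemmadeltal} every such edge has the form $\{y,y+\delta_j\}$ with $j=\bm{i+1}$ or $j=\bm{i-1}$. For $n\ge4$ the two directions $\bm{i+1}$ and $\bm{i-1}$ are distinct and both differ from $i$, so there are at most two imbalanced edges and $imb(e)\le2$. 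The argument will therefore reduce to deciding, separately, whether each of these two candidate edges actually is imbalanced.

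First I would determine exactly when the cyclically right edge $e'=\{y,y+\delta_{\bm{i+1}}\}$ is imbalanced for $e$. Two conditions must hold: $y+\delta_{\bm{i+1}}\in\Luc_n$ and $x+\delta_{\bm{i+1}}\notin\Luc_n$. The second is automatic, since $\bm{(\bm{i+1})-1}=i$ and $x_i=1$: the coordinate $x_{\bm{i+1}}$ equals $0$ (as $x\in\Luc_n$ and $x_i=1$), so flipping it to $1$ produces two cyclically consecutive $1$'s. For the first condition, note that $y_i=0$ and $y_{\bm{i+1}}=x_{\bm{i+1}}=0$, so the only cyclic neighbour of position $\bm{i+1}$ that can obstruct membership in $\Luc_n$ is $\bm{i+2}$; hence $y+\delta_{\bm{i+1}}\in\Luc_n$ if and only if $y_{\bm{i+2}}=x_{\bm{i+2}}=0$. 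Thus a cyclically right imbalanced edge exists precisely when $x_{\bm{i+2}}=0$. The identical computation with $\bm{i-1}$ in place of $\bm{i+1}$ (and $\bm{i-2}$ in place of $\bm{i+2}$) shows that a cyclically left imbalanced edge exists precisely when $x_{\bm{i-2}}=0$.

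Combining the two cases, $imb(e)$ is simply the number of the two bits $x_{\bm{i+2}}$ and $x_{\bm{i-2}}$ that are equal to $0$: both equal to $1$ gives imbalance $0$, exactly one equal to $0$ gives imbalance $1$, and both equal to $0$ gives imbalance $2$, which is exactly the content of Table~\ref{tab:Table2}. Alternatively, one may first invoke the automorphism $x_1\dots x_n\mapsto x_i x_{i+1}\dots x_n x_1\dots x_{i-1}$ of $\Lambda_n$, already used in the previous theorem, to reduce to the case $i=1$, where the two determining positions become $x_3$ and $x_{n-1}$, and then run the same verification; but keeping $i$ general and working with the cyclic indices $\bm{i\pm1},\bm{i\pm2}$ seems cleaner.

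The step I expect to require the most care is the boundary behaviour of the cyclic indices for small $n$, which is precisely why the statement assumes $n\ge4$. When $n=4$ the two determining positions coincide, $\bm{i-2}=\bm{i+2}$, so the middle row of Table~\ref{tab:Table2} describes a configuration that cannot occur; nevertheless the two candidate edges still use the \emph{distinct} directions $\bm{i+1}$ and $\bm{i-1}$, so they remain distinct edges and counting the $0$'s among $\{x_{\bm{i+2}},x_{\bm{i-2}}\}$ still yields the correct value (here only $0$ or $2$). For $n=3$, by contrast, a determining position collapses onto one of the imbalanced-edge directions, which genuinely breaks the scheme, so I would note that no hypothesis weaker than $n\ge4$ suffices.
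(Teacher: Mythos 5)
Your argument is correct and takes essentially the same route as the paper's proof: both reduce $imb(e)$ to counting the cyclically right and cyclically left imbalanced edges via Lemmas \ref{lemmazerol} and \ref{lemmadeltal}, and both identify $x_{\bm{i+2}}=0$ and $x_{\bm{i-2}}=0$ as the respective existence criteria (your treatment of the second condition $x+\delta_{\bm{i\pm1}}\notin\Luc_n$ and of the $n=4$ coincidence $\bm{i-2}=\bm{i+2}$ is just spelled out more explicitly). Your closing aside that the scheme ``genuinely breaks'' for $n=3$ is slightly overstated---there every edge of $\Lambda_3$ has imbalance $2$ and the table still happens to predict this since the relevant bits are forced to $0$---but this lies outside the stated hypothesis and does not affect the proof.
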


\begin{proof}
Assume that $x$ is the upper endpoint of the edge. Since $x_{\bm{i+1}}=y_{\bm{i+1}}=0$ there exists a couple $(e,e')$ in $R^i_{\Lambda_n}$ if and only if $e'=\{y,y+\delta_{\bm{i+1}}\}$ and $x_{\bm{i+2}}=0$. Since $x_{\bm{i-1}}=y_{\bm{i-1}}=0$ there exists a couple $(e,e')$ in $L^i_{\Lambda_n}$ if and only if $e'=\{y,y+\delta_{\bm{i-1}}\}$ and $x_{\bm{i-2}}=0$. Therefore $imb(e)$ is completely determined by the values of $x_{\bm{i+2}},x_{\bm{i-2}}$ according to Table \ref{tab:Table2}.
\end{proof}\qed

Let $e=\{x,x+\delta_i\}$ be an edge of $\Lambda_n$ and  $\theta(e)=x_{i+1} x_{i+2}\ldots x_{n} x_{1} x_{2}\ldots x_{i-1} $. Note that $\theta$ is a one to one mapping between the set of edges using the direction $i$ and $\Fib_{n-1}^{00}$. This remark gives a combinatorial interpretation of the well known result $|E(\Lambda_n)|=nF_{n-1}$ \cite{mupe2001}. We will use the same idea for the number edges with a given imbalance.

\begin{corollary}
Let $n\geq5$ then the imbalance of any edge $e=\{x,y\}$ in $ \Lambda_n$ is at most 2. Furthermore if $A$, $B$ and $C$ are the sets of edges with imbalance 0,1 and 2 respectively then $|A|=nF_{n-5}$, $|B|=2nF_{n-4}$ and $|C|=nF_{n-3}$.
\end{corollary}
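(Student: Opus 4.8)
The plan is to reduce the whole statement to counting Fibonacci strings of length $n-3$ according to their two extreme letters, reusing the edge description that produced $|E(\Lambda_n)|=nF_{n-1}$. To begin, the bound $imb(e)\le 2$ and the exact value of $imb(e)$ are already furnished by the preceding theorem: for $n\ge 5$ the five indices $\bm{i-2},\bm{i-1},i,\bm{i+1},\bm{i+2}$ are pairwise distinct, so Table~\ref{tab:Table2} applies and, with $x$ the upper endpoint of $e=\{x,y\}$ using direction $i$, determines $imb(e)$ from the pair $(x_{\bm{i-2}},x_{\bm{i+2}})$.

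Next I would fix a direction $i$ and use the bijection $\theta$ carrying an edge $e=\{x,x+\delta_i\}$ with $x_i=1$ to the length-$(n-3)$ string $u=x_{\bm{i+2}}x_{\bm{i+3}}\ldots x_{\bm{i-2}}$, obtained by deleting the block $x_{\bm{i-1}}x_ix_{\bm{i+1}}$ and reading the remaining letters cyclically. Since $x\in\Luc_n$ with $x_i=1$ forces $x_{\bm{i-1}}=x_{\bm{i+1}}=0$, this block is always $010$, and $u$ (a run of cyclically consecutive letters of a Lucas string) lies in $\Fib_{n-3}$; conversely every $u\in\Fib_{n-3}$ rebuilds a unique Lucas string with $x_i=1$, because reinserting $0,1,0$ into the three deleted positions never creates an $11$. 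Hence $\theta$ is a bijection between the edges of direction $i$ and $\Fib_{n-3}$, and, crucially, the first letter of $u$ is $x_{\bm{i+2}}$ and the last is $x_{\bm{i-2}}$. Table~\ref{tab:Table2} then reads: $imb(e)=0$ iff $u$ begins and ends with $1$; $imb(e)=2$ iff $u$ begins and ends with $0$; $imb(e)=1$ iff exactly one extreme letter of $u$ equals $1$.

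It then remains to count, for $m=n-3$, the strings of $\Fib_m$ by their pair of extreme letters, which I would do algebraically from Proposition~\ref{nbstring}. Writing $N^{ab}$ for the number of strings of $\Fib_m$ beginning with $a$ and ending with $b$, we have $N^{00}=|\Fib_m^{00}|=F_m$; splitting $\Fib_m^{0.}$ by its last letter gives $N^{01}=|\Fib_m^{0.}|-|\Fib_m^{00}|=F_{m+1}-F_m=F_{m-1}$, the reversal symmetry of $\Fib_m$ gives $N^{10}=N^{01}=F_{m-1}$, and therefore $N^{11}=|\Fib_m|-N^{00}-2N^{01}=F_{m+2}-F_m-2F_{m-1}=F_{m-2}$. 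As these counts are independent of $i$ and each edge uses exactly one direction, summing over the $n$ directions yields $|A|=nN^{11}=nF_{n-5}$, $|B|=n(N^{01}+N^{10})=2nF_{n-4}$ and $|C|=nN^{00}=nF_{n-3}$ after substituting $m=n-3$.

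The step I expect to demand the most care is showing that $\theta$ is well defined and bijective for every $n\ge 5$, namely that $\bm{i+2}$ and $\bm{i-2}$ are distinct from each other and from the deleted block $\{\bm{i-1},i,\bm{i+1}\}$; this is exactly what the hypothesis $n\ge 5$ secures. Obtaining the four numbers $N^{ab}$ from Proposition~\ref{nbstring}, rather than by direct insertion arguments, is precisely what lets me avoid the boundary degeneracies — for instance $N^{11}=0$ when $m=2$, i.e.\ $n=5$ — that a more naive count would force me to examine case by case.
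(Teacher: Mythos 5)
Your proof is correct and follows essentially the same route as the paper: fix a direction $i$, invoke the cyclic symmetry, and classify the edges using that direction by the pair $(x_{\bm{i-2}},x_{\bm{i+2}})$ through a bijection with Fibonacci strings. The only difference is packaging --- the paper sets up a separate bijection onto $\Fib_{n-3}^{00}$, $\Fib_{n-4}^{00}$ or $\Fib_{n-5}^{00}$ for each of the four cases, whereas you use a single bijection onto $\Fib_{n-3}$ and then count its strings by their extreme letters; the two computations coincide.
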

\begin{proof}
For $i\in[1,n]$ let $E_i$ be the set of edges $\{x,y\}$ using direction $i$. Since the number of edges in $E_i$  with  a given imbalance is independent of $i$ we can assume without loss of generality that $i=1$ and consider  $A_1=A\cap E_1$, $B_1=B\cap E_1$ and $C_1=C\cap C_1$. Let $x$ be the end point such that $x_1=1$. We have thus $x_2=x_n=0$, $x+\delta_2\notin \Luc_n $ and $x+\delta_n\notin \Luc_n $.
\begin{itemize}
\item Assume $x_3=x_{n-1}=0$. Then $y+\delta_2\in \Luc_n  $, $y+\delta_n\in \Luc_n$ and the edge $\{x,y\}$ belongs to $C_1$. Furthermore 
$\theta(x)= x_{3} x_{4}\ldots x_{n-1}$ is one to one mapping between the set of this kind of edges and $\Fib_{n-3}^{00}$. The contribution of this case to $C_1$ is  $F_{n-3}$.
\item Assume $x_3=x_{n-1}=1$. Then $y+\delta_2\notin \Luc_n  $, $y+\delta_n\notin \Luc_n$ and the edge $\{x,y\}$ belongs to $A_1$.  Since $x_4=x_{n-2}=0, $\  
$\theta(x)= x_{4} x_{5}\ldots x_{n-2}$ is one to one mapping between the set of this kind of edges and $\Fib_{n-5}^{00}$. The contribution of this case to $A_1$ is  $F_{n-5}.$
\item Assume $x_3=1$ and $x_{n-1}=0$. Then $y+\delta_2\notin \Luc_n  $, $y+\delta_n\in \Luc_n$. The edge $\{x,y\}$ belongs to $B_1$.  Furthermore $\theta(x)= x_{4} x_{5}\ldots x_{n-1}$ is one to one mapping between the set of this kind of edges and $\Fib_{n-4}^{00}$. The contribution of this case to $B_1$ is  $F_{n-4}$.
\item The case  $x_3=0$ and $x_{n-1}=1$ is similar and thus contributes also $F_{n-4}$ to $B_1$ 
\end{itemize}

\end{proof}\qed\\

\section{Cube-complement of Fibonacci cube}

Let $\overline{\Fib}_n$ be the set of binary strings of length $n$  with $11$ as substring. We will call the strings in $\overline{\Fib}_n$ non-Fibonacci strings of length $n$. The cube complement of $\Gamma_n$ is $\overline{\Gamma}_n$ the subgraph of $Q_n$ induced by $\overline{\Fib}_n$.\\
Note that $\overline{\Fib}_n$ is connected since there is always a path between any vertex $x\in V({\overline{\Gamma}_n})$ and $1^n$.
Furthermore  $|V({\overline{\Gamma}_n})|=2^n-F_{n+2}$.\\
Let $A_n$,$B_n$,$C_n$ be the sets of edges of $Q_n$ incident to 0,1 and 2, respectively, strings of $\Fib_n$. We have thus  $A_n=E({\overline{\Gamma}_n})$ and $C_n=E(\Gamma_n)$.
\begin{proposition}\label{nbtotal}
$|E(Q_n)|=|E(\overline{\Gamma}_n)|+|B_n|+|E(\Gamma_n)|$ is  the total number of $0$'s in binary strings of length $n$.\\
$|B_n|+|E(\Gamma_n)|$ is  the total number of $0$'s in Fibonacci strings of length $n$.\\
$|B_n|+|E(\overline{\Gamma}_n)|$ is  the total number of $1$'s in non-Fibonacci strings of length $n$.\\
$|E(\overline{\Gamma}_n)|$ is the total number of $0$'s in non-Fibonacci strings  of length $n$.\\
$|E(\Gamma_n)|$ is the total number of $1$'s in Fibonacci strings  of length $n$.\\ 

\end{proposition}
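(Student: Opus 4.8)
The plan is to count the edges of $Q_n$ via the two complementary bijections suggested by the ``upper endpoint''/``lower endpoint'' terminology introduced earlier. Every edge of $Q_n$ can be written uniquely as $\{y,y+\delta_i\}$ with $y_i=0$, so the map sending this edge to the pair (lower endpoint $y$, position $i$) is a bijection between $E(Q_n)$ and the set of pairs $(y,i)$ with $y\in\Bin_n$ and $y_i=0$. Grouping these pairs by $y$ shows that $|E(Q_n)|$ equals the total number of $0$'s in binary strings of length $n$; together with the obvious partition $E(Q_n)=A_n\sqcup B_n\sqcup C_n$ and the identifications $A_n=E(\overline{\Gamma}_n)$, $C_n=E(\Gamma_n)$, this gives the first assertion. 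Dually, writing each edge as $\{x,x+\delta_i\}$ with $x_i=1$ gives a bijection between $E(Q_n)$ and pairs (upper endpoint $x$, position $i$) with $x_i=1$, which counts $1$'s.

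The key structural fact I would isolate is a monotonicity statement: along any edge, passing from the lower endpoint $y$ to the upper endpoint $x=y+\delta_i$ replaces a $0$ by a $1$ and hence cannot destroy an existing occurrence of $11$, while passing from $x$ to $y$ replaces a $1$ by a $0$ and hence cannot create one. Consequently $y\in\overline{\Fib}_n$ implies $x\in\overline{\Fib}_n$, equivalently $x\in\Fib_n$ implies $y\in\Fib_n$. In particular an edge in $B_n$ (exactly one Fibonacci endpoint) always has its Fibonacci endpoint as the lower one and its non-Fibonacci endpoint as the upper one.

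With this lemma the remaining four statements follow by reinterpreting each edge class in the appropriate bijection. For $C_n=E(\Gamma_n)$, the condition ``both endpoints Fibonacci'' is equivalent, by monotonicity, to ``upper endpoint Fibonacci''; hence $E(\Gamma_n)$ is in bijection with pairs $(x,i)$, $x\in\Fib_n$, $x_i=1$, giving the number of $1$'s in Fibonacci strings. For $A_n=E(\overline{\Gamma}_n)$, ``both endpoints non-Fibonacci'' is equivalent to ``lower endpoint non-Fibonacci'', so $E(\overline{\Gamma}_n)$ is in bijection with pairs $(y,i)$, $y\in\overline{\Fib}_n$, $y_i=0$, giving the number of $0$'s in non-Fibonacci strings. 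For $B_n\cup C_n$, ``at least one Fibonacci endpoint'' is equivalent to ``lower endpoint Fibonacci'', yielding the $0$'s in Fibonacci strings; and for $A_n\cup B_n$, ``at least one non-Fibonacci endpoint'' is equivalent to ``upper endpoint non-Fibonacci'', yielding the $1$'s in non-Fibonacci strings. These are exactly the five assertions.

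The only real obstacle is the monotonicity lemma and, more precisely, the care needed to locate the Fibonacci endpoint of a $B_n$-edge; everything else is a matter of matching each edge class to the correct one of the two bijections. Once one observes that the Fibonacci property is closed under turning $1$'s into $0$'s (the downward-closed, i.e. daisy-cube, property of $\Gamma_n$), each of the five counts is immediate.
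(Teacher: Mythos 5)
Your proposal is correct and follows essentially the same route as the paper: both use the two bijections sending an edge to its lower endpoint (counting $0$'s) and to its upper endpoint (counting $1$'s), together with the observation that the Fibonacci property is preserved when a $1$ is turned into a $0$, so that a $B_n$-edge has its Fibonacci endpoint below and its non-Fibonacci endpoint above. Your explicit isolation of this downward-closedness as a lemma just makes visible what the paper states in passing.
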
 
\begin{proof}
Let $e$ be an edge of $Q_n$ and let $x,y$ such that $e=\{x,y\}$ with $x_i=0$ and $y_i=1$. Define the mappings $\phi(\{x,y\})= (x,i)$ and $\psi(\{x,y\})= (y,i)$. Note that $\phi$ is a one to one mapping between  $E(Q_n)$ and $\{(s,i); s\in\Bin_n,s_i=0\}$ the set of  $0$'s appearing in strings of $\Bin_n$. Likewise  $\psi$ is a one to one mapping between  $E(Q_n)$ and $\{(s,i); s\in\Bin_n,s_i=1\}$ the set of $1$'s appearing in strings of $\Bin_n$.\\
Furthermore an edge incident to exactly one Fibonacci string $z$ is mapped by $\phi$ to $(z,i)$ and by $\psi$ to $(z+\delta_i,i)$.
Therefore the restriction of the reverse mapping $\phi^{-1}$ to $\{(s,i); s\in\Fib_n,s_i=0\}$ the set of $0$'s appearing in strings of $\Fib_n$, is a one to one mapping to the edges of $E(\Gamma_n)\cup B_n$. 
For the same reason the restriction of the reverse mapping $\psi^{-1}$ to  $\{(s,i); s\in\Fib_n,s_i=1\}$, the set of $1$'s appearing in strings of $\Fib_n$, is a one to one mapping to the edges of $E (\overline{\Gamma}_n)\cup B_n$.\\ 
Since a binary string is a Fibonacci string or a non-Fibonacci string we can deduce the last two affirmations from the previous. We can also give a direct proof. Indeed the restriction of $\phi$ to edges of $\overline{\Gamma}_n$ define a one to one mapping between $E(\overline{\Gamma}_n)$ and $\{(s,i); s\in\overline{\Fib}_n,s_i=0\}$. Likewise the restriction of $\psi$ to edges of $\Gamma_n$ define a one to one mapping between $E(\Gamma_n)$ and $\{(s,i); s\in\Fib_n,s_i=1\}$.
\end{proof}\qed 

\begin{proposition}\label{nbtotal0}
The total number of number of $0$'s in Fibonacci strings of length $n$ is $\sum_{i=1}^{n}{F_{i+1}F_{n-i+2}}$.
\end{proposition}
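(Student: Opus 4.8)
The plan is to count the total number of $0$'s by organizing them according to the position in which they occur. Concretely, the total number of $0$'s in Fibonacci strings of length $n$ equals the cardinality of the set $\{(s,i)\;;\;s\in\Fib_n,\ s_i=0\}$, and I would split this set according to the value of $i\in[1,n]$. Thus it suffices to count, for each fixed position $i$, the number $N_i$ of Fibonacci strings of length $n$ having a $0$ in position $i$, and then to establish that $\sum_{i=1}^n N_i=\sum_{i=1}^n F_{i+1}F_{n-i+2}$.

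For a fixed $i$, I would set up a bijection in the spirit of the remark following Proposition \ref{nbedges}, sending a Fibonacci string $s$ with $s_i=0$ to the pair of substrings $\theta(s)=(s_1 s_2\ldots s_{i-1},\,s_{i+1}s_{i+2}\ldots s_n)$. The image lies in $\Fib_{i-1}\times\Fib_{n-i}$, since any substring of a Fibonacci string is again a Fibonacci string. The map is injective because $s$ is recovered from $\theta(s)$ by reinserting a $0$ in position $i$. For surjectivity I would take any $(u,v)\in\Fib_{i-1}\times\Fib_{n-i}$ and check that the concatenation $u\,0\,v$ lies in $\Fib_n$: an occurrence of $11$ would have to sit inside $u$, inside $v$, or straddle position $i$, and the last possibility is excluded because position $i$ carries a $0$. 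Hence $\theta$ is a one to one mapping between the Fibonacci strings with $s_i=0$ and $\Fib_{i-1}\times\Fib_{n-i}$, so that $N_i=|\Fib_{i-1}|\cdot|\Fib_{n-i}|$.

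To finish, I would invoke Proposition \ref{nbstring} to evaluate $|\Fib_{i-1}|=F_{i+1}$ and $|\Fib_{n-i}|=F_{n-i+2}$, giving $N_i=F_{i+1}F_{n-i+2}$; summing over $i\in[1,n]$ then yields the claimed expression $\sum_{i=1}^n F_{i+1}F_{n-i+2}$. The boundary cases $i=1$ and $i=n$ are covered by the conventions $\Fib_0=\{\epsilon\}$ and $|\Fib_0|=F_2=1$, so no separate treatment is needed. The only genuinely non-formal point, and hence the main (though mild) obstacle, is the surjectivity verification: one must be certain that gluing an arbitrary admissible prefix and suffix around the fixed $0$ never creates a forbidden $11$, which is exactly what the $0$ in position $i$ guarantees.
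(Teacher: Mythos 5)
Your proposal is correct and follows essentially the same route as the paper: for each fixed position $i$ you use the map $s\mapsto(s_1\ldots s_{i-1},\,s_{i+1}\ldots s_n)$ as a bijection between $\{s\in\Fib_n\;;\;s_i=0\}$ and $\Fib_{i-1}\times\Fib_{n-i}$, then apply Proposition \ref{nbstring} and sum over $i$. The surjectivity point you flag (that $u0v$ is again a Fibonacci string) is exactly the observation the paper makes, so there is no gap.
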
 
\begin{proof}
Let $s$ be a Fibonacci string of length $n$ and $i\in[1,n]$ then $s_1s_2\dots s_{i-1}$ and $s_{i+1}s_{i+2}\dots s_{n}$ are Fibonacci strings. Reciprocally if $u$ and $v$ are Fibonacci strings then $u0v$ is also a Fibonacci string.   Therefore the mapping define by $\theta(s,i)=(s_{1}s_{2}\dots s_{i-1}, s_{i+1}s_{i+2}\dots s_{n}) $ is  a one to one mapping between $\{(s,i); s\in\Fib_n,s_i=0 \}$ and the Cartesian product $\Fib_{i-1}\times \Fib_{n-i}$. The identity follows.
\end{proof}\qed

\begin{theorem}
The number of edges of $\overline{\Gamma}_n$ is given by the equivalent expressions:
\begin{enumerate}[label=(\roman*)]
\item $|E(\overline{\Gamma}_n)|=n2^{n-1}-\sum_{i=1}^{n}{F_{i+1}F_{n-i+2}}.$
\item $|E(\overline{\Gamma}_n)|=n2^{n-1}-\frac{4nF_{n+1}+(3n-2)F_n}{5}.$
 \end{enumerate}
\end{theorem}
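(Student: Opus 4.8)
The plan is to derive (i) directly from the two combinatorial identities just established, and then to obtain (ii) from (i) by evaluating the Fibonacci convolution in closed form.

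For (i), the starting point is that the total number of $0$'s over all binary strings of length $n$ equals $n2^{n-1}$: for each of the $n$ positions, exactly half of the $2^n$ strings carry a $0$ there. By the first assertion of Proposition \ref{nbtotal} this quantity is precisely $|E(Q_n)|$. Splitting the binary strings into Fibonacci and non-Fibonacci strings partitions the occurrences of $0$ accordingly, so
$$n2^{n-1} = \big(\text{number of 0's in Fibonacci strings}\big) + \big(\text{number of 0's in non-Fibonacci strings}\big).$$
By Proposition \ref{nbtotal0} the first summand is $\sum_{i=1}^{n} F_{i+1}F_{n-i+2}$, and by the fourth assertion of Proposition \ref{nbtotal} the second summand is $|E(\overline{\Gamma}_n)|$. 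Rearranging yields (i) at once.

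For (ii), I would reduce the convolution $\sum_{i=1}^{n} F_{i+1}F_{n-i+2}$ to the edge count $|E(\Gamma_{n+2})|$, whose closed form is recorded in Proposition \ref{nbedges}. The substitution $j=i+1$ turns the sum into $\sum_{j=2}^{n+1} F_j F_{n-j+3}$, a convolution whose two indices always sum to $n+3$; extending the range to $j=1,\dots,n+2$ reproduces exactly $|E(\Gamma_{n+2})| = \sum_{j=1}^{n+2} F_j F_{(n+2)-j+1}$. Subtracting the two boundary terms, both equal to $F_{n+2}$, gives
$$\sum_{i=1}^{n} F_{i+1}F_{n-i+2} = |E(\Gamma_{n+2})| - 2F_{n+2}.$$
Inserting $|E(\Gamma_{n+2})| = \frac{(n+2)F_{n+3}+2(n+3)F_{n+2}}{5}$, rewriting $F_{n+3}=2F_{n+1}+F_n$ and $F_{n+2}=F_{n+1}+F_n$ via the Fibonacci recurrence, and collecting coefficients reduces the expression to $\frac{4nF_{n+1}+(3n-2)F_n}{5}$. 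Combined with (i) this gives the second expression for $|E(\overline{\Gamma}_n)|$.

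The argument is short because the substantive combinatorial work is carried out by Propositions \ref{nbtotal} and \ref{nbtotal0}; what remains is essentially bookkeeping. The single point demanding care is the index shift in (ii): one must correctly identify the boundary terms $j=1$ and $j=n+2$ removed in passing from the full convolution $|E(\Gamma_{n+2})|$ to the truncated sum, and then keep the recurrence substitutions straight so that the $F_{n+1}$ and $F_n$ coefficients collapse to $4n$ and $3n-2$. No genuine obstacle arises beyond this routine algebra.
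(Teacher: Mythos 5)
Your proof is correct, and part (i) follows the paper's route exactly: both arguments equate $|E(Q_n)|=n2^{n-1}$ with the total number of $0$'s in binary strings, split that count between Fibonacci and non-Fibonacci strings via Proposition \ref{nbtotal}, and invoke Proposition \ref{nbtotal0}. For part (ii), however, you take a genuinely different path. The paper introduces a second counting identity, $nF_{n+2}=|B_n|+2|E(\Gamma_n)|$ (summing, over the Fibonacci vertices, their degrees in $Q_n$), which combined with $|E(Q_n)|=|E(\overline{\Gamma}_n)|+|B_n|+|E(\Gamma_n)|$ gives $|E(\overline{\Gamma}_n)|=n2^{n-1}-nF_{n+2}+|E(\Gamma_n)|$, and then substitutes the closed form of $|E(\Gamma_n)|$ from Proposition \ref{nbedges}. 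You instead stay entirely inside expression (i) and evaluate the convolution $\sum_{i=1}^{n}F_{i+1}F_{n-i+2}$ by reindexing it as $\sum_{j=2}^{n+1}F_jF_{n+3-j}=|E(\Gamma_{n+2})|-2F_{n+2}$ and then reducing $\frac{(n+2)F_{n+3}+(2n-4)F_{n+2}}{5}$ to $\frac{4nF_{n+1}+(3n-2)F_n}{5}$; I checked the index shift, the two boundary terms $F_1F_{n+2}=F_{n+2}F_1=F_{n+2}$, and the final coefficient collection, and all are right. The paper's derivation buys a self-contained combinatorial explanation of the correction term (it is a degree sum, so no manipulation of the convolution is needed), while yours buys a cleaner logical structure in which (ii) is literally a restatement of (i) after a closed-form evaluation, at the cost of slightly more Fibonacci algebra; both uses of Proposition \ref{nbedges} are legitimate since that proposition supplies both the convolution and the closed form.
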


\begin{proof}
Combining the first two identities in Proposition \ref{nbtotal} together with Proposition \ref{nbtotal0} we obtain the first expression.\\
For the second expression note first that the $n$ edges of $Q_n$ incident to a vertex of $\Fib_n$ belongs to $E(\Gamma_n)$ or $B_n$. Making the sum over all vertices of $\Fib_n$ the edges of $E(\Gamma_n)$ are obtained two times therefore $nF_{n+2}=|B_n|+2|E(\Gamma_n)|$. By Proposition \ref{nbtotal} $|E(\overline{\Gamma}_n)|=|E(Q_n)|-|B_n|-|E(\Gamma_n)|=n2^{n-1}-nF_{n+2}+|E(\Gamma_n)|$. Using the expression of $|E(\Gamma_n)|$ given by Proposition \ref{nbedges} we obtain the final result.
\end{proof}\qed

The sequence  $(|E(\overline{\Gamma}_n)|,n\geq1)=0,0,2,10,35,104,\dots $ can also be obtain by an inductive relation:\\
\begin{proposition}
The number of edges of $\overline{\Gamma}_n$ is the sequence defined by \\
$|E(\overline{\Gamma}_n)|=|E(\overline{\Gamma}_{n-1})|+|E(\overline{\Gamma}_{n-2})|+(n+4)2^{n-3}-F_{n+2} \ \ (n \geq 3)$\\
$|E(\overline{\Gamma}_1)|=|E(\overline{\Gamma}_2)|=0.$
\end{proposition}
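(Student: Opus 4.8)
The plan is to establish the recurrence structurally, by decomposing $\overline{\Gamma}_n$ according to its first one or two coordinates — exactly the kind of decomposition that produces the induction formula for $|E(\Gamma_n)|$ in Proposition \ref{nbedges} — and then counting edges inside each part and across parts. First I would split $\overline{\Fib}_n$ by the value of $x_1$. The non-Fibonacci strings with $x_1=0$ are precisely $\{0w : w\in\overline{\Fib}_{n-1}\}$, which induces a copy of $\overline{\Gamma}_{n-1}$ on coordinates $2,\dots,n$. The strings with $x_1=1$ split further by $x_2$: those with $x_2=1$ are $\{11u : u\in\Bin_{n-2}\}$ (each already contains $11$), inducing a copy of $Q_{n-2}$; those with $x_2=0$ are $\{10v : v\in\overline{\Fib}_{n-2}\}$, inducing a copy of $\overline{\Gamma}_{n-2}$.

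The internal edges are then immediate: the three parts contribute $|E(\overline{\Gamma}_{n-1})|$, $|E(Q_{n-2})|=(n-2)2^{n-3}$, and $|E(\overline{\Gamma}_{n-2})|$ respectively. It remains to count the edges joining different parts. An edge between the $x_1=0$ part and the $x_1=1$ part must flip coordinate $1$, i.e.\ have the form $\{0w,1w\}$; both endpoints lie in $\overline{\Fib}_n$ precisely when $w\in\overline{\Fib}_{n-1}$, so these edges number $|V(\overline{\Gamma}_{n-1})|=2^{n-1}-F_{n+1}$. Within the $x_1=1$ part, an edge joining the $x_2=1$ and $x_2=0$ blocks must flip coordinate $2$, i.e.\ be $\{11u,10u\}$, and this is an edge of $\overline{\Gamma}_n$ exactly when $u\in\overline{\Fib}_{n-2}$, giving $|V(\overline{\Gamma}_{n-2})|=2^{n-2}-F_n$ such edges. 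Summing the five contributions and using $F_{n+1}+F_n=F_{n+2}$ together with $2^{n-1}+2^{n-2}=6\cdot 2^{n-3}$ collapses the non-recursive terms to $(n+4)2^{n-3}-F_{n+2}$, which is the claimed recurrence; the base cases $|E(\overline{\Gamma}_1)|=|E(\overline{\Gamma}_2)|=0$ are checked directly, and the range $n\geq 3$ is exactly what is needed for all three parts to be defined.

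The step requiring care — and the only genuine obstacle — is the classification of cross edges: one must verify that flipping coordinate $1$ (resp.\ coordinate $2$) keeps the string in $\overline{\Fib}_n$ under the stated condition on $w$ (resp.\ $u$), and that every edge of $\overline{\Gamma}_n$ is counted exactly once. This last point is the heart of the bookkeeping: edges flipping a coordinate $\geq 3$ stay inside a single part, edges flipping coordinate $2$ stay inside the $x_1=1$ half and cross only between Group II and Group III, and edges flipping coordinate $1$ are the only ones crossing the $x_1$-partition, so the five families are disjoint and exhaustive.

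As a purely algebraic alternative I could substitute the closed form $|E(\overline{\Gamma}_n)|=n2^{n-1}-\frac{4nF_{n+1}+(3n-2)F_n}{5}$ from the preceding theorem into both sides of the recurrence and verify the identity using $F_{n+1}=F_n+F_{n-1}$. This bypasses the combinatorics entirely but reduces to a mechanical Fibonacci computation, so I would favour the structural argument above, which also keeps the derivation in the same spirit as the rest of the paper.
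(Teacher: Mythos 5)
Your proof is correct and follows essentially the same route as the paper: the same partition of $\overline{\Fib}_n$ into $\{0\bm{w}\}$, $\{10\bm{v}\}$ and $\{11\bm{u}\}$, the same three internal edge counts, and the same collapse of the remaining terms to $(n+4)2^{n-3}-F_{n+2}$. The only cosmetic difference is that you count all coordinate-$1$ cross edges as a single family of size $|V(\overline{\Gamma}_{n-1})|=2^{n-1}-F_{n+1}$, whereas the paper splits them into the $\{00s,10s\}$ edges and the $\{0s,1s\}$ edges with $s\in\overline{\Fib}_{n-1}^{1.}$; the totals agree.
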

\begin{proof}
Let $n\geq 3$. 
Let $\overline{\Fib_n}^{1.}$ be the set of strings of $\overline{\Fib}_n$ that begin with $1$. Since $\overline{\Fib_n}^{1.}=\{10\bm{s}; \bm{s}\in\overline{\Fib}_{n-2}\}\cup\{11\bm{s}; \bm{s}\in \Bin_{n-2}\}$ we have $|\overline{\Fib_n}^{1.}|=2^{n-1}-F_n$. This identity is also valid for $n=1$ or $n=2$.\\
Consider the following partition of the set of vertices of $\overline{\Gamma}_n$:
$\overline{\Fib}_{n}=\{0\bm{s}; \bm{s}\in \overline{\Fib}_{n-1}\}\cup\{10\bm{s}; \bm{s}\in \overline{\Fib}_{n-2}\}\cup\{11\bm{s}; \bm{s}\in \Bin_{n-2}\}$.\\
From this decomposition the sequence of vertices of $\overline{\Gamma}_n$ follows the induction
$$|V(\overline{\Gamma}_{n})|=|V(\overline{\Gamma}_{n-1})|+|V(\overline{\Gamma}_{n-2})|+2^{n-2}.$$
We deduce also a partition of the edges $\overline{\Gamma}_n$ in six sets:\\
-$|E(\overline{\Gamma}_{n-1})|$ edges between vertices of $\{0\bm{s}; \bm{s}\in \overline{\Fib}_{n-1}\}$.\\
-$|E(\overline{\Gamma}_{n-2})|$ edges between vertices of $\{10\bm{s}; \bm{s}\in \overline{\Fib}_{n-2}\}$.\\
-$|E(Q_{n-2}|$ edges between vertices of $\{11\bm{s}; \bm{s}\in \Bin_{n-2}\}$.\\
-Edges between vertices of $\{0\bm{s}; \bm{s}\in \overline{\Fib}_{n-1}\}$ 
and $\{10\bm{s}; \bm{s}\in \overline{\Fib}_{n-2}\}$.
Those edges are the $|V(\overline{\Gamma}_{n-2})|$ edges $\{00s,10s\}$ where $s\in \overline{\Fib}_{n-2}$.\\
-Edges between vertices of  $\{10\bm{s}; \bm{s}\in \overline{\Fib}_{n-2}\}$ and $\{11\bm{s}; \bm{s}\in \Bin_{n-2}\}$.
Those edges are the $|V(\overline{\Gamma}_{n-2})|$ edges $\{10s,11s\}$ where $s\in \overline{\Fib}_{n-2}$.\\
-Edges between vertices of  $\{0\bm{s}; \bm{s}\in \overline{\Fib}_{n-1}\}$ and $\{11\bm{s}; \bm{s}\in \Bin_{n-2}\}$.
Those edges are the $2^{n-2}-F_{n-1}$ edges $\{0s,1s\}$ where $s$ is a string of  $\overline{\Fib}_{n-1}^{1.}$ .\\
Therefore $|E(\overline{\Gamma}_n)|=|E(\overline{\Gamma}_{n-1})|+|E(\overline{\Gamma}_{n-2})|+(n-2)2^{n-3}+2(2^{n-2}-F_{n})+2^{n-2}-F_{n-1}$.
\end{proof}\qed 

We will call \emph{block} of a binary string $s$ a maximal substring of consecutive $1$'s.  Therefore a string in $\overline{\Fib}_n$ is as string with a least one block of length greater that 1.
The degree of a vertex of  $\Gamma_{n}$ lies between $\lfloor(n+2)/3\rfloor$ and $n$. The  number of vertices of a given degree is determined in  \cite{klmope2011}.\\
We will now give a similar result for $\overline{\Gamma}_{n}$.

\begin{theorem}
The degree of a vertex in $\overline{\Gamma}_{n}$ is $n$, $n-1$ or $n-2$ and the number of vertices of a given degree are:\\
$|E(\Gamma_{n-1})|$ vertices of degree $n-2$\\
$|E(\Gamma_{n-2})|$ vertices of degree $n-1$\\
$\sum_{k=0}^{n-4}{2^k|E(\Gamma_{n-k-3})|}$ vertices of degree $n$.
\end{theorem}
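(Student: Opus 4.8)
The plan is to reduce the computation of $d_{\overline{\Gamma}_n}(x)$ to counting the directions $i$ for which $x+\delta_i\in\Fib_n$, i.e. the neighbours of $x$ that leave $\overline{\Gamma}_n$. First I would note that flipping a $0$ into a $1$ can only create new occurrences of $11$, so it never turns a non-Fibonacci string into a Fibonacci one; hence such a direction $i$ must have $x_i=1$, and setting $x_i=0$ must destroy every occurrence of $11$ simultaneously. Since an occurrence of $11$ disjoint from position $i$ is unaffected, this forces $x$ to have a single maximal run of $1$'s of length $\ge 2$ (a long block), containing $i$. Splitting a run of length $\ell$ at position $i$ leaves two runs whose lengths must both be $\le 1$, which is possible only for $\ell=2$ (both positions work) or $\ell=3$ (only the central one works). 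Thus the number of such directions is $2$, $1$ or $0$, giving $d_{\overline{\Gamma}_n}(x)\in\{n-2,n-1,n\}$, with $d=n-2$ (resp. $n-1$) exactly when $x$ has a unique long block of length $2$ (resp. $3$).

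For the two lower degrees I would use the identity $|E(\Gamma_m)|=\#\{(w,j):w\in\Fib_m,\ w_j=1\}$: an edge is determined by its upper endpoint $w$ and direction $j$, and $w_j=1$ forces $w_{j-1}=w_{j+1}=0$, so $w+\delta_j$ is automatically Fibonacci. A degree-$(n-2)$ vertex has a unique block $11$ isolated from the rest; contracting it to a single $1$ yields a Fibonacci string $w$ of length $n-1$ with a marked position $j$ where $w_j=1$, the inverse expanding the marked $1$ back to $11$. This is a bijection onto $E(\Gamma_{n-1})$, giving $|E(\Gamma_{n-1})|$ such vertices; contracting $111$ to $1$ gives the $|E(\Gamma_{n-2})|$ vertices of degree $n-1$ in the same way.

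The degree-$n$ vertices are the real difficulty, because they are described by a disjunction rather than a single contractible block: $x$ has degree $n$ iff it is non-Fibonacci but of neither shape above, i.e. iff it has two disjoint long blocks or one long block of length $\ge 4$. With no canonical block to contract, I would count these vertices $N_n$ indirectly. Since every non-Fibonacci string has degree $n$, $n-1$ or $n-2$, the previous paragraph gives $N_n=|V(\overline{\Gamma}_n)|-|E(\Gamma_{n-1})|-|E(\Gamma_{n-2})|=2^n-F_{n+2}-|E(\Gamma_{n-1})|-|E(\Gamma_{n-2})|$. It then suffices to match this with the stated sum, and the key point is that both obey the first-order recurrence $a_n=2a_{n-1}+|E(\Gamma_{n-3})|$: for the sum this is immediate by peeling off the $k=0$ term, and for the subtraction formula it follows from $|E(\Gamma_{n-1})|=|E(\Gamma_{n-2})|+|E(\Gamma_{n-3})|+|V(\Gamma_{n-3})|$ together with $|V(\Gamma_{n-3})|=F_{n-1}$ and $2F_{n+1}-F_{n+2}=F_{n-1}$. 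Checking one base case then closes the argument.

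Alternatively, more in the recursive spirit of the previous section, I would condition on the initial run of $1$'s: a leading $0$ reduces $x$ to a degree-$(n-1)$ vertex and a leading $10$ to a degree-$(n-2)$ vertex, while the strings beginning with $11$ are counted directly (analysing exactly when flipping a bit of the initial block is a good flip gives $2^{n-2}-F_n$ of them), yielding $N_n=N_{n-1}+N_{n-2}+(2^{n-2}-F_n)$, which one solves to the closed form. I expect the main obstacle to be precisely this last step: unlike degrees $n-1$ and $n-2$, the degree-$n$ count admits no one-line bijection, so one must pass through either the subtraction identity or the resummation of the recurrence, and the small-$n$ boundary cases, where the long block meets an end of the string, require separate care.
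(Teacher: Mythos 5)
Your degree classification and your counts for degrees $n-2$ and $n-1$ are correct and essentially match the paper's: the paper also locates the leftmost and rightmost occurrences $i_l,i_r$ of $11$ and splits into the cases $i_r=i_l$, $i_r=i_l+1$, $i_r\geq i_l+2$; for the first two cases it counts by decomposing $x$ into a prefix in $\Fib_{i_l-1}^{.0}$ and a suffix in $\Fib_{n-i_l-1}^{0.}$ and summing $\sum_{i}F_{i}F_{n-i}=|E(\Gamma_{n-1})|$, which is only cosmetically different from your contraction bijection onto pairs $(w,j)$ with $w_j=1$ (you invoke the ``number of $1$'s in Fibonacci strings'' interpretation of $|E(\Gamma_m)|$ from Proposition \ref{nbtotal}, the paper invokes the convolution formula of Proposition \ref{nbedges}). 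Where you genuinely diverge is the degree-$n$ count. You treat it as bijectively intractable and fall back on subtraction from $|V(\overline{\Gamma}_n)|=2^n-F_{n+2}$ plus verification of the recurrence $a_n=2a_{n-1}+|E(\Gamma_{n-3})|$; this does work (the algebra closes because $2F_{n+1}-F_{n+2}=F_{n-1}=|V(\Gamma_{n-3})|$ cancels against the $|V(\Gamma_{n-3})|$ term in the edge recurrence of Proposition \ref{nbedges}), but it costs you the recurrences, a base case, and any explanation of where the summand $2^k|E(\Gamma_{n-k-3})|$ comes from. The paper's observation is that the same leftmost/rightmost parametrization still gives a direct bijection in the third case: with $k=i_r-i_l-2\geq 0$, the substring $x_{i_l+2}\dots x_{i_r-1}$ is a completely arbitrary element of $\Bin_k$ (any choice keeps $i_l$ leftmost and $i_r$ rightmost), while the prefix and suffix again range over $\Fib_{i_l-1}^{.0}$ and $\Fib_{n-k-i_l-3}^{0.}$, yielding $\sum_{k=0}^{n-4}\sum_{i_l}2^kF_{i_l}F_{n-k-i_l-2}=\sum_{k=0}^{n-4}2^k|E(\Gamma_{n-k-3})|$ term by term. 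So your proof is valid, but the paper's uniform treatment of all three cases is shorter and explains the shape of the answer.
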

\begin{remark}
Using Proposition \ref{nbedges} these  numbers can be rewritten as, respectively,\\
$\frac{(n-1)F_{n}+(2n)F_{n-1}}{5}$, $\frac{(n-2)F_{n-1}+(2n-2)F_{n-2}}{5}$ and $2^{n}-\frac{(3n+7)F_{n}+(n+5)F_{n-1}}{5}$.
\end{remark}
\begin{proof} This is true for $n\leq3$ thus assume $n\geq 4$. Let $x$ be a vertex of $\overline{\Gamma}_{n}$ and consider the indices $i_l$ and $i_r$ such that $x_{i_l}x_{i_l+1}$ and $x_{i_r}x_{i_r+1}$ are the leftmost, rightmost respectively, pairs of consecutive $1$'s. Thus  $i_l=min\{i;x_ix_{i+1}=11\}$ and $i_r=max\{i;x_ix_{i+1}=11\}$. Consider the three possible cases\\
\begin{itemize}
\item $i_r=i_l$. Then there exists a unique block of length at least 2 and this block $x_{i_l}x_{i_l+1}$ is of length 2. Thus  $x_1\dots x_{i_l-1}\in \Fib_{i_l-1}^{.0}$ and $x_{i_l+2}\dots x_n\in \Fib_{n-i_l-1}^{0.}$.

For $i=i_l$ or $i=i_l+1$ the string $x+\delta_i$ is a Fibonacci string. For $i$ distinct of $i_l$ and $i_l+1$ then $x+\delta_i$ is a string  of $\overline{\Fib}_{n}$. Therefore $d(x)=n-2$.

Since $x_1\dots x_{i_l-1}$ and $x_{i_l+2}\dots x_n$ are arbitrary strings of $\Fib_{i_l-1}^{.0}$ and $\Fib_{n-i_l-1}^{0.}$ the number of vertices of this kind is by Proposition \ref{nbstring} $\sum_{i_l=1}^{n-1}{F_{i_l}F_{n-i_l}}=|E(\Gamma_{n-1})|$.

\item $i_r=i_l+1$. Then there exists a unique block of length at least 2 and this block $x_{i_l}x_{i_l+1}x_{i_l+2}$ is of length 3. Thus $x= x_1\dots x_{i_l-1}111x_{i_l+3}\dots x_n$ where  $x_1\dots x_{i_l-1}\in\Fib_{i_l-1}^{.0}$ and  $x_{i_l+3}\dots x_n\in\Fib_{n-i_l-2}^{0.}$.

For $i=i_l+1$ the string $x+\delta_i$ is a Fibonacci string. For $i$ distinct of $i_l+1$  then $x+\delta_i$ is a string  of $\overline{\Fib}_{n}$. Therefore $d(x)=n-1$.

Since $x_1\dots x_{i_l-1}$ and $x_{i_l+3}\dots x_n$ are arbitrary strings of $\Fib_{i_l-1}^{.0}$ and $\Fib_{n-i_l-2}^{0.}$ the number of vertices of this kind is  $\sum_{i_l=1}^{n-2}{F_{i_l}F_{n-i_l-1}}=|E(\Gamma_{n-2})|$.

\item $i_r\geq i_l+2$. Then there exists a unique block of length at least 4  or there exist at least  two blocks of length at least 2.
In both cases for any $i\in[1,n]$ $x+\delta_i$ is a string  of $\overline{\Fib}_{n}$. Therefore $d(x)=n$.

Let $k=i_r-i_l-2$. Note that $k\in[0,n-4]$ and $k$ fixed $i_l\in[1,n-k-3]$. The strings $x_1x_2\dots x_{i_l-1}$ and $x_{i_l+k+4}x_{i_l+k+5}\dots x_n$ are arbitrary strings in  $\Fib_{i_l-1}^{.0}$ and $\Fib_{n-k-i_l-3}^{0.}$. Since $x_{i_l+2}\dots x_{i_r-1}$ is an arbitrary string in $\Bin_{k}$ the number of vertices of this kind is  $\sum_{k=0}^{n-4}{\sum_{i_l=1}^{n-k-3}{2^k F_{i_l}F_{n-k-i_l-2}}}=\sum_{k=0}^{n-4}{2^k|E(\Gamma_{n-k-3})|}$.
\end{itemize}
\end{proof}\qed 

The sequence $0,0,0,1,4,13,36,\dots$ formed by the numbers of vertices on degree n in $\overline{\Gamma}_n, (n\geq1)$ already appears in OEIS \cite{oeis} as sequence A235996 of the number of length n binary words that contain at least one pair of consecutive 0's followed by (at some point in the word) at least one pair of consecutive 1's. This is clearly the same sequence.

As noticed in Figure \ref{F1} $\overline{\Gamma}_{3}$ and $\overline{\Gamma}_{4}$ are isomorphic to $\Gamma_2$ and $\Gamma_4$ respectively. Our last result  complete this observation.
\begin{theorem}
For $n\geq 4$ $\Gamma_n$ is isomorphic to an induced subgraph of $\overline{\Gamma}_{n}$.
\end{theorem}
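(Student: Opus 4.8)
The plan is to exhibit, for every $n\ge 4$, an explicit injection $f_n:\Fib_n\to\overline{\Fib}_n$ whose image induces in $\overline{\Gamma}_n$ a subgraph isomorphic to $\Gamma_n$. Since both $\Gamma_n$ and $\overline{\Gamma}_n$ are induced subgraphs of $Q_n$, this amounts to requiring that for all $x,y\in\Fib_n$ the strings $x,y$ are adjacent in $Q_n$ if and only if $f_n(x),f_n(y)$ are adjacent in $Q_n$. I would build $f_n$ by induction on $n$, using the decomposition $\Fib_n=\{0\bm{s};\,\bm{s}\in\Fib_{n-1}\}\cup\{10\bm{t};\,\bm{t}\in\Fib_{n-2}\}$, and take as base case $n=4$ the isomorphism $\Gamma_4\cong\overline{\Gamma}_4$ already noticed in Figure~\ref{F1}: such an isomorphism is exactly a bijection $f_4:\Fib_4\to\overline{\Fib}_4$ that preserves and reflects $Q_4$-adjacency, i.e. an induced embedding.

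For $n\ge 5$ I would set
$$f_n(0\bm{s})=0\,f_{n-1}(\bm{s})\ \ (\bm{s}\in\Fib_{n-1}),\qquad f_n(10\bm{t})=1\,f_{n-1}(0\bm{t})\ \ (\bm{t}\in\Fib_{n-2}).$$
Here $0\bm{t}\in\Fib_{n-1}$, so the second formula is well defined, and the whole construction only refers to $f_{n-1}$, giving a simple (not strong) induction. By the inductive hypothesis $f_{n-1}(\bm{s})$ and $f_{n-1}(0\bm{t})$ already contain $11$ as a substring, so prepending a bit keeps them in $\overline{\Fib}_n$; hence $f_n$ maps into $\overline{\Fib}_n$. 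Injectivity is immediate: the two families of images are separated by their leading bit ($0$ versus $1$), and within each family injectivity of $f_n$ follows from that of $f_{n-1}$.

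It then remains to verify the adjacency equivalence, splitting the pairs $x,y\in\Fib_n$ into three cases. If both lie in $0\Fib_{n-1}$, or both in $10\Fib_{n-2}$, the leading bits of the two images agree, and the equivalence reduces directly to the inductive hypothesis applied to the corresponding strings of $\Fib_{n-1}$ (in the second family, to $0\bm{t}$ and $0\bm{t}'$, using $\bm{t}\sim\bm{t}'\iff 0\bm{t}\sim 0\bm{t}'$). The delicate case is the mixed one $x=0\bm{s}$, $y=10\bm{t}$: in $\Gamma_n$ adjacency forces the leading bits to differ and everything else to coincide, which happens exactly when $\bm{s}=0\bm{t}$; on the image side $f_n(x)=0f_{n-1}(\bm{s})$ and $f_n(y)=1f_{n-1}(0\bm{t})$ also differ in the leading bit, so they are $Q_n$-adjacent iff $f_{n-1}(\bm{s})=f_{n-1}(0\bm{t})$, i.e. iff $\bm{s}=0\bm{t}$ by injectivity of $f_{n-1}$. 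The two conditions therefore coincide.

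I expect the mixed case to be the only genuine obstacle: it is where one must check that the recursive placement reproduces precisely the single cross-edge $\{0\bm{t},10\bm{t}\}$ of $\Gamma_n$ and creates no spurious extra adjacency between the two halves. Separating the two image families by the leading bit is exactly what prevents such extra edges (two images with different leading bits can agree in at most $n-1$ remaining positions, so they are adjacent only when those positions are identical), and this is precisely why $f_{n-1}(0\bm{t})$ is prefixed by $1$ rather than $0$.
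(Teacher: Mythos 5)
Your proof is correct, but it takes a genuinely different route from the paper. The paper's proof is non-recursive: it defines a single explicit map $\theta(x)=\overline{x}_4\overline{x}_2\overline{x}_3\overline{x}_1\overline{x}_5\cdots\overline{x}_n$ (complement every bit, then swap coordinates $1$ and $4$), which is the restriction of an automorphism of $Q_n$; adjacency preservation and reflection are then automatic, and the only thing left to check is that the image of every Fibonacci string contains $11$, done by a three-way case analysis on $x_2x_3$ within the first four coordinates. You instead build the embedding recursively from the decomposition $\Fib_n=\{0\bm{s}\}\cup\{10\bm{t}\}$, anchored at the sporadic isomorphism $\Gamma_4\cong\overline{\Gamma}_4$; here landing in $\overline{\Fib}_n$ is trivial (prepending a bit preserves the $11$-substring), but the adjacency equivalence now needs the three-case verification you give, all of whose steps check out (in particular the mixed case, where the differing leading bits force equality of the tails and injectivity of $f_{n-1}$ closes the argument). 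The trade-off: the paper's map is a one-line global formula whose correctness is structural, while yours exposes the recursive self-similarity of $\Gamma_n$ inside $\overline{\Gamma}_n$ at the cost of an induction and a base case that, as written, leans on Figure~\ref{F1}; for a fully self-contained write-up you should exhibit $f_4$ explicitly (an easy finite check on $8$ vertices, e.g.\ the paper's $\theta$ restricted to $n=4$ would do).
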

\begin{proof}
Let $n\geq4$ and define a mapping between binary strings of length $n$ by $\theta(x)=\theta(x_1x_2\dots x_n)=\overline{x}_4\overline{x}_2\overline{x}_3\overline{x}_1\overline{x}_5\overline{x}_6\dots \overline{x}_n$. Let $\sigma$  be the permutation on $\{1,2,\dots,n\}$ define by $\sigma(1)=4$, $\sigma(4)=1 $ and $\sigma(i)=i $ for $i\notin\{1,4\}$. 

Note first that $x\in\Fib_n$ implies $\theta(x)\in\overline{\Fib}_n$. Indeed since $x_2x_3\neq 11$
we have three cases
\begin{itemize}
\item $x_2x_3=00$ then  $\overline{x}_2\overline{x}_3=11$ is a substring of $\theta(x)$
\item $x_2x_3=10$ then $x_1=0$ and $\overline{x}_3\overline{x}_1=11$ is a substring of $\theta(x)$
\item $x_2x_3=01$ then $x_4=0$ and  $\overline{x}_4\overline{x}_2=11$ is a substring of $\theta(x)$.
\end{itemize}
 Therefore $\theta$ maps vertices of $\Gamma_n$ to vertices in $\overline{\Gamma}_n$.

Let $\{x,x+\delta_i\}$ be an edge of $\Gamma_n$ then by construction we have $\theta(x+\delta_{i})=\theta(x)+\delta_{\sigma(i)}$
and therefore $\theta(x)$ and $\theta(x+\delta_i)$ are adjacent in $\overline{\Gamma}_n$.

Since $\theta$ is a transposition we have also for all $i\in[1,n]$ that $\theta(x)+\delta_i=\theta(x+\delta_{\sigma(i)})$. Therefore  if  $\{\theta(x),\theta (y)\}$ is an edge in the subgraph induced by $\theta(\Gamma_n)$ then $\theta (y)=\theta(x)+\delta_i$ for some $i$, $y=x+\delta_{\sigma(i)}$ and thus $\{x,y\}\in E (\Gamma_n)$.
\end{proof} 
\qed

Since $0^n$ is a vertex of degree $n$ in $\Gamma_n$ this graph cannot be a subgraph of $\overline{\Gamma}_m$ for $m<n$ thus this mapping in  $\overline{\Gamma}_n$ is optimal. Conversely it might be interesting to determine the minimum $m$ such that $\overline{\Gamma}_{n}$ is isomorphic to an induced subgraph of $\Gamma_{m}$. We already know that $m\leq 2n-1$ since the hypercube $Q_n$ is an induced subgraph of  $\Gamma_{2n-1}$ \cite{km2012}. 
\bibliographystyle{plain}
\bibliography{edgesfibonaccicubescomplements}

\begin{thebibliography}{10}

\bibitem{albert1997}
Michael Albertson.
\newblock The irregularity of a graph.
\newblock {\em Ars Comb.}, 46, 08 1997.

\bibitem{ali2019}
Yaser Alizadeh, Emeric Deutsch, and Sandi Klav\v{z}ar.
\newblock On the irregularity of $\pi $-permutation graphs, fibonacci cubes,
  and trees.
\newblock {\em Bulletin of the Malaysian Mathematical Sciences Society}, 43, 04
  2020.

\bibitem{ca2011}
Aline Castro, Sandi Klav\v{z}ar, Michel Mollard, and Yoomi Rho.
\newblock On the domination number and the 2-packing number of fibonacci cubes
  and lucas cubes.
\newblock {\em Computers and Mathematics with Applications}, 61(9):2655 --
  2660, 2011.

\bibitem{camo2012}
Aline Castro and Michel Mollard.
\newblock The eccentricity sequences of fibonacci and lucas cubes.
\newblock {\em Discrete Mathematics}, 312(5):1025 -- 1037, 2012.

\bibitem{dedo2002}
Ernesto Ded{\'o}, Damiano Torri, and Norma~Zagaglia Salvi.
\newblock The observability of the fibonacci and the lucas cubes.
\newblock {\em Discrete Mathematics}, 255(1):55 -- 63, 2002.

\bibitem{egec2020}
\"Omer Egecioglu, Elif Saygı, and Z\"ulf\"ukar Saygi.
\newblock The irregularity polynomials of fibonacci and lucas cubes.
\newblock {\em Bulletin of the Malaysian Mathematical Sciences Society}, 07
  2020.

\bibitem{hsu93}
W.-J. {Hsu}.
\newblock Fibonacci cubes-a new interconnection topology.
\newblock {\em IEEE Transactions on Parallel and Distributed Systems},
  4(1):3--12, 1993.

\bibitem{Kla2005}
Sandi Klav\v{z}ar.
\newblock On median nature and enumerative properties of fibonacci-like cubes.
\newblock {\em Discrete Mathematics}, 299:145--153, 08 2005.

\bibitem{survey}
Sandi Klav\v{z}ar.
\newblock Structure of fibonacci cubes: a survey.
\newblock {\em Journal of Combinatorial Optimization}, 25:505--522, 2013.

\bibitem{km2012}
Sandi Klav\v{z}ar and Michel Mollard.
\newblock Cube polynomial of fibonacci and lucas cubes.
\newblock {\em Acta Applicandae Mathematicae}, 117, 02 2012.

\bibitem{kamo2018}
Sandi Klav\v{z}ar and Michel Mollard.
\newblock Daisy cubes and distance cube polynomial.
\newblock {\em European Journal of Combinatorics}, 80, 05 2017.

\bibitem{klmope2011}
Sandi Klav\v{z}ar, Michel Mollard, and Marko Petkovšek.
\newblock The degree sequence of fibonacci and lucas cubes.
\newblock {\em Discrete Mathematics}, 311(14):1310 -- 1322, 2011.

\bibitem{mupe2001}
Emanuele Munarini, Claudio Perelli~Cippo, and Norma Salvi.
\newblock On the lucas cubes.
\newblock {\em The Fibonacci Quarterly}, 39, 02 2001.

\bibitem{ra2013}
Mark Ramras.
\newblock Congestion-free routing of linear permutations on fibonacci and lucas
  cubes.
\newblock {\em Australasian Journal of Combinatorics}, 60:1--10, 01 2014.

\bibitem{oeis}
Neil J.~A. Sloane and The OEIS~Foundation Inc.
\newblock The on-line encyclopedia of integer sequences, 2020.

\bibitem{ve2019}
Aleksander Vesel.
\newblock Cube-complements of generalized fibonacci cubes.
\newblock {\em Discrete Mathematics}, 342:1139--1146, 04 2019.

\end{thebibliography}

\end{document}